\numberwithin{equation}{section}
\def\R{\mathbb R}
\def\Z{\mathbb Z}
\def\d{\mathrm d}
\def\CA{\mathcal A}
\def\ee{\varepsilon}
\def\wt{\widetilde}
\def\gcd{\operatorname{gcd}}
\newtheorem{theorem}{Theorem}[section]
\newtheorem{lemma}[theorem]{Lemma}
\newtheorem{proposition}[theorem]{Proposition}
\newtheorem{corollary}[theorem]{Corollary}
\theoremstyle{remark}
\theoremstyle{definition}
\newtheorem{definition}[theorem]{Definition}
\theoremstyle{remark}
\numberwithin{equation}{section}
\begin{document}

\title{A robust version of Freiman's $3k-4$ Theorem and applications}

\author{Xuancheng Shao}
\thanks{X.S. was supported by a Glasstone Research Fellowship.}
\address{Department of Mathematics, University of Kentucky, Lexington, KY, 40506, USA}
\email{xuancheng.shao@uky.edu}

\author{Max Wenqiang Xu}
\thanks{M.W.X. was supported by a London Mathematics Society Undergraduate Research Bursary and the Mathematical Institute at University of Oxford.}
\address{Department of Mathematics, University College London, Gower Street, London, WC1E 6BT, United Kingdom}
\email{wenqiang.xu@ucl.ac.uk}


\maketitle

\begin{abstract}
We prove a robust version of Freiman's $3k - 4$ theorem on the restricted sumset $A+_{\Gamma}B$, which applies when the doubling constant is at most $\tfrac{3+\sqrt{5}}{2}$ in general and at most $3$ in the special case when $A = -B$. As applications, we derive robust results with other types of assumptions on popular sums, and structure theorems for sets satisfying almost equalities in discrete and continuous versions of the Riesz-Sobolev inequality.
\end{abstract}

\section{introduction}

For two sets $A, B \subset \Z$, the complete sumset $A+B$ is defined by
\[ A+B := \{a+b \colon a \in A, b \in B\}. \]
We have the trivial lower bound $|A+B| \geq |A| + |B| - 1$, and the equality holds if and only if $A, B$ are arithmetic progressions with the same common difference. Freiman's $3k-4$ theorem goes a step further, and states that if $|A| = |B| = k$ and $|A+B| \leq 3k-4$, then $A,B$ must have diameters bounded by $|A+B|-k+1$; i.e., there exist arithmetic progressions $P,Q$ with the same common difference and sizes at most $|A+B|-k+1$, such that $A \subset P$ and $B \subset Q$.

In this paper we aim at proving a robust version of Freiman's $3k-4$ theorem. For a subset $\Gamma \subset A \times B$, the restricted sumset $A+_{\Gamma}B$ is defined by
\[ A+_{\Gamma}B = \{a+b \colon (a,b) \in \Gamma\}. \]
We are mainly concerned with situations when $\Gamma$ is almost all of $A \times B$. The advantage of considering a robust version is to relax the rigid conditions and keep the important feature. See \cite{Tao} and \cite{BSG} for more work in this flavor.

\begin{theorem}\label{1+theta}
Let $\ee > 0$. Let $A,B \subset \Z$ be subsets with $|A|= |B|= n \ge \max\{3, 2\ee^{-1/2}\} $, and let $\Gamma \subset A \times B$ be a subset with $|\Gamma| \geq (1-\ee)n^2$. If $|A +_{\Gamma} B| < (1+\theta - 11\ee^{1/2})n$ where $\theta = \tfrac{\sqrt{5}+1}{2}$, then there are arithmetic progressions $P,Q$ with the same common difference and sizes at most $|A +_{\Gamma} B|- (1 - 5\ee^{1/2})n$, such that $|A \cap P| \geq (1 - \ee^{1/2}) n$ and $|B \cap Q| \geq (1 - \ee^{1/2})n$. 
\end{theorem}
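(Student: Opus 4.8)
The plan is to convert the hypothesis about the restricted sumset into a hypothesis about the \emph{popular sums} of the complete sumset $A+B$, and then to prove a robust form of Freiman's $3k-4$ theorem for popular sumsets.

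\emph{Step 1: reduction to popular sums of the complete sumset.} For $s\in\Z$ let $r(s)=|A\cap(s-B)|$ count the representations $s=a+b$ with $a\in A$, $b\in B$, and for $\delta>0$ put $P_\delta=\{s\in\Z:\ r(s)\ge\delta n\}\subseteq A+B$. First I would establish the two-sided bound
\[
\bigl(1-\ee-(1+\theta)\delta\bigr)\,n\ \le\ |P_\delta|\ \le\ |A+_{\Gamma}B|+\frac{\ee}{\delta}\,n .
\]
For the upper bound: the $s\in P_\delta$ lying in $A+_{\Gamma}B$ number at most $|A+_{\Gamma}B|$, while each $s\in P_\delta\setminus(A+_{\Gamma}B)$ has all of its $\ge\delta n$ representing pairs inside $(A\times B)\setminus\Gamma$, and these pair-sets are disjoint over distinct $s$, so there are at most $(n^2-|\Gamma|)/(\delta n)\le\ee n/\delta$ such $s$. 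For the lower bound, use $\sum_{s\in A+_{\Gamma}B}r(s)\ge\sum_{s}r_{\Gamma}(s)=|\Gamma|\ge(1-\ee)n^2$ together with $r\le n$ and $|A+_{\Gamma}B|<(1+\theta)n$. Now set $\delta=\ee^{1/2}$; the hypotheses $n\ge 2\ee^{-1/2}$ and $|A+_{\Gamma}B|<(1+\theta-11\ee^{1/2})n$ place us in the window $(1-O(\ee^{1/2}))n\le|P_\delta|\le|A+_{\Gamma}B|+\ee^{1/2}n<(1+\theta-10\ee^{1/2})n$. So it suffices to prove a \emph{popular-sumset} form of the theorem.

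\emph{Core Lemma (robust $3k-4$ for popular sumsets).} There are absolute constants $c,C>0$ so that: if $A,B\subseteq\Z$ with $|A|=|B|=n$, if $\delta\in(0,c)$ with $\delta n\ge 2$, and if the popular sumset $P_\delta$ of the pair $(A,B)$ satisfies $(1-C\delta)n\le|P_\delta|<(1+\theta-C\delta)n$, then --- after the standard normalization reducing to a common difference $1$ --- there are intervals $R_A,R_B\subseteq\Z$ with $|R_A|,|R_B|\le|P_\delta|-n+C\delta n$, $|A\cap R_A|\ge(1-C\delta)n$, and $|B\cap R_B|\ge(1-C\delta)n$. Granting this with $\delta=\ee^{1/2}$, Step~1 verifies its hypotheses (this is exactly what the numerical constant $11$ buys); undoing the normalization, $R_A$ and $R_B$ pull back to arithmetic progressions $P=qR_A+\alpha$ and $Q=qR_B+\beta$ with the same common difference $q$; finally $|A\cap P|=|A\cap R_A|\ge(1-C\ee^{1/2})n$, similarly for $B$, and $|P|=|R_A|\le|P_\delta|-n+C\ee^{1/2}n\le|A+_{\Gamma}B|-(1-(C+1)\ee^{1/2})n$, which is at most $|A+_{\Gamma}B|-(1-5\ee^{1/2})n$ once the (distinct) error constants in the Core Lemma are small enough --- tracking them precisely is what fixes the constants $11$ and $5$.

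\emph{Step 2: proving the Core Lemma --- the main obstacle.} I would adapt Freiman's original proof of the $3k-4$ theorem, carrying error terms of size $O(\delta n)$ throughout. After the normalization and a translation, assume $A\subseteq[0,L_A]$ with both ends ``$\delta$-occupied'' (each of $[0,\delta L_A]$ and $[(1-\delta)L_A,L_A]$ meets $A$) --- otherwise pass to a shorter hull, losing $O(\delta n)$ elements and only slightly shrinking $|P_\delta|$ --- and likewise $B\subseteq[0,L_B]$. The engine is a Pollard/Kneser-type lower bound on the multiplicities $r(s)$ coming from the truncated sumsets $(A\cap I)+B$ and $A+(B\cap J)$ at initial and terminal intervals $I,J$: it produces long intervals inside $P_\delta$ near the two ends of the hull of $A+B$, and weighing their lengths against $|P_\delta|<(1+\theta)n$ forces $L_A,L_B\le|P_\delta|-n+O(\delta n)$; then $R_A=[0,L_A]$, $R_B=[0,L_B]$ do the job. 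The delicacy --- and the reason the threshold is the golden-ratio constant $\theta$, not the classical value $3$ --- is that this sumset input is only as strong as the bound on $L_A,L_B$ currently in hand, so the estimate must be iterated, seeded by the trivial bound $L_A,L_B<|P_\delta|+O(\delta n)$; the associated fixed-point inequality closes exactly for doubling $|P_\delta|/n<1+\theta=\theta^2$, and near that threshold the extremal pairs degenerate into unions of two progressions in golden-ratio proportion, which the argument must exclude. Keeping the exceptional elements --- those near the ends of the hulls, and those realized by fewer than $\delta n$ pairs --- under uniform control across every stage of this iteration is the principal technical obstacle, and is also where the losses of order $\ee^{1/2}n$ originate.
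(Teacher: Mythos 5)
Your Step~1 reduction to the popular sumset $P_\delta=\{s: r_{A+B}(s)\ge\delta n\}$ is correct as far as it goes, and it is a genuinely different route from the paper's. The paper instead passes to large subsets $A'\subset A$, $B'\subset B$ on which each element has few $\Gamma$-exceptions, augments $\Gamma$ so that $A'+_{\Gamma'}B'$ is $(K,s)$-\emph{regular} (a two-parameter notion capturing both popularity \emph{and} a per-element bound on missing pairs), and then applies Proposition~\ref{main prop A+B} directly; the popular-sum statement (essentially your Core Lemma) appears in the paper as a \emph{corollary} (Corollary~\ref{popular-sum}), deduced from Theorem~\ref{1+theta}, not the other way around.

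The genuine gap is Step~2. Your Core Lemma carries the entire difficulty of the theorem, and the proposed proof does not stand up. First, the seeding ``trivial bound $L_A,L_B<|P_\delta|+O(\delta n)$'' is false: a set $A=\{0,1,\dots,n-2\}\cup\{M\}$ with $B=\{0,\dots,n-1\}$ has $|P_\delta|=O(n)$ while the hull of $A$ has length $M$, arbitrarily large. Passing to a ``shorter hull'' only after deleting exceptional endpoints is circular, since bounding the number of such deletions is precisely the content of the theorem; the paper avoids this by reducing $A,B$ modulo the diameter $\ell$ and applying Kneser's theorem in $\Z/\ell\Z$, for which no a priori diameter bound is needed. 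Second, and more fundamentally, the golden-ratio threshold $1+\theta$ does \emph{not} come from a Freiman-style fixed-point iteration. In the paper it comes from Proposition~\ref{kneser theta}: a four-step pigeonhole/intersection/triple-counting argument in an abelian group (following Lev), which shows that if the $(K,s)$-regular restricted sumset $A+_\Gamma B$ differs from $A+B$ then $|A+_\Gamma B|>\theta n -K-2s$. Nothing in your sketch supplies this ingredient or an equivalent; the claim that ``the associated fixed-point inequality closes exactly for $|P_\delta|/n<1+\theta$'' and that the extremal golden-ratio configurations ``must be excluded'' are assertions, not arguments, and I do not believe the iterative scheme as described yields the sharp threshold. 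You would need to either reconstruct Proposition~\ref{kneser theta} (or Lev's Theorem~2 in~\cite{Lev-theta}) and the modular-reduction/stabilizer analysis of Proposition~\ref{main prop A+B}, or find a genuinely new argument; at present the Core Lemma has no proof.
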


Our primary focus is on the upper bound for $|A+_{\Gamma}B|$ in the assumption. While one naturally expects Theorem~\ref{1+theta} to hold with $1+\theta$ replaced by $3$, we are only able to prove this expected result in the special case when $B = -A$.

\begin{theorem}\label{best 3}
Let $\ee > 0$. Let $A \subset \Z$ be a subset with $|A|= n\ge \max\{3, 2\ee^{-1/2}\}$, and let $\Gamma \subset A \times A$ be a subset with $|\Gamma| \geq (1-\ee)n^2$. If $|A -_{\Gamma} A| < (3- 11 \ee^{1/2})n$, then there exists an arithmetic progression with length at most $|A -_{\Gamma} A| - (1 - 5\ee^{1/2})n$ such that $|A\cap P| \ge (1 -\ee^{1/2})n$.
\end{theorem}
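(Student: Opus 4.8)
The plan is to combine Theorem~\ref{1+theta}, which already covers doubling below $1+\theta$, with a direct argument for the intermediate range $[(1+\theta)n,\,3n)$, the point being that the hypothesis $B=-A$ makes the relevant difference set symmetric. First I would normalize: translate so that $\min A=0$, and divide out $\gcd(A-A)$, so that $A\subseteq[0,L]$ with $0,L\in A$ and $L=\operatorname{diam}(A)$; since these operations preserve $|A-_\Gamma A|$, it then suffices to exhibit a subset of $A$ of size $\ge(1-\ee^{1/2})n$ whose diameter is at most $|A-_\Gamma A|-(1-5\ee^{1/2})n$, and rescale back. Next I would pass to the set $A_1\subseteq A$ of vertices of $\Gamma$-degree $\ge(1-\ee^{1/2})n$; then $|A_1|\ge(1-\ee^{1/2})n$, and for every $a\in A_1$ the neighbourhood $N(a)\subseteq A$ has $|N(a)|\ge(1-\ee^{1/2})n$ with $a-N(a)\subseteq A-_\Gamma A$. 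If $|A-_\Gamma A|<(1+\theta-11\ee^{1/2})n$, then Theorem~\ref{1+theta} applied to the pair $(A,-A)$ produces exactly the progression required, so I assume henceforth $(1+\theta-11\ee^{1/2})n\le|A-_\Gamma A|<(3-11\ee^{1/2})n$. At this stage I would also replace $\Gamma$ by $\Gamma\cap\{(a',a):(a,a')\in\Gamma\}$, costing $\ee\mapsto2\ee$ (which the constants below are chosen to absorb), so that $D:=A-_\Gamma A$ now satisfies $D=-D$, $D\subseteq[-L,L]$, $|D|=2|D\cap\Z_{>0}|+O(1)$, and $|D|\le|A-_\Gamma A|$.

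It remains to bound $L_1:=\operatorname{diam}(A_1)$ by $|D|-(1-O(\ee^{1/2}))n$, a robust, symmetry-adapted form of the Freiman/Lev--Smeliansky diameter inequality; I see two complementary mechanisms. If $A$ contains an element $v$ separated from the rest of $A$ by a gap much larger than $n$, then deleting $v$ removes from $D$ both of the sets $v-N(v)$ and $N(v)-v$, which lie on opposite sides of $0$ and away from the differences of the bulk, hence shrinks $D$ by at least $2(1-\ee^{1/2})n-O(1)$; so the restricted difference set of $A\setminus\{v\}$ has size $\le|D|-2n+O(\ee^{1/2}n)<(1+\theta)|A\setminus\{v\}|$ once $|D|<3n$, and Theorem~\ref{1+theta} applies to $(A\setminus\{v\},-(A\setminus\{v\}))$, placing all but $O(\ee^{1/2}n)$ of $A$ in a progression of length $\le|A-_\Gamma A|-3n+O(\ee^{1/2}n)$, comfortably within budget; one iterates to peel off all such outliers. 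Otherwise $A$ has no large gap in $[0,L]$, so $A-A$ --- and therefore $D$, up to the $O(\ee^{1/2}n)$ values one may delete with an $\ee n^2$ budget of pairs --- is dense throughout $[-L,L]$; feeding this together with the two near-$n$-length blocks $v-N(v)$ (for $v=\max A_1$) and $N(u)-u$ (for $u=\min A_1$), placed on opposite sides of $0$ via $D=-D$, into a count over the popular differences $\{t:r_\Gamma(t)\ge\ee^{1/2}n/3\}$ --- of which there are $\ge(1-2\ee^{1/2})n$, each satisfying $|A\cap(A+t)|\ge\ee^{1/2}n/3$ --- yields $|D|\ge L_1+(1-O(\ee^{1/2}))n$. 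In either case $L_1\le|D|-(1-O(\ee^{1/2}))n\le|A-_\Gamma A|-(1-O(\ee^{1/2}))n$; tuning the constants so the error is $\le5\ee^{1/2}n$ and undoing the normalization gives the progression.

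The hard part will be the second mechanism: proving $|D|\ge L_1+(1-O(\ee^{1/2}))n$ with a small implied constant, when $|D|$ can be as large as $(3-O(\ee^{1/2}))n$. The naive union bound from the two translated neighbourhoods yields only $|D|\ge(2-O(\ee^{1/2}))n$ with no dependence on $L_1$, so one must genuinely track how the ``between-cluster'' differences of $A$ accumulate disjointly from the ``within-cluster'' ones --- a robust form of the fact that the extremal configuration $A=[0,m]\cup[L-m,L]$ lies exactly at $|A-A|=3n-3$, which is precisely why the hypothesis carries its $-11\ee^{1/2}n$ slack. This is also the step where $B=-A$ is essential: for a general $B$ the two ends of the configuration need not be reflections of one another, can overlap after the natural identification, and that overlap is what caps the general argument at the golden-ratio threshold $1+\theta=\theta^2$. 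The remaining, routine-but-delicate, work is bookkeeping: the $\le\ee^{1/2}n$ vertices dropped in forming $A_1$, the $\ee\mapsto2\ee$ from symmetrizing, the slack in the popular-difference averaging, the choice of outlier gap and the termination of the peeling iteration, and the interaction of the $\gcd$-reduction with the passage to $A_1$, all have to be kept within the $11\ee^{1/2}n$ and $5\ee^{1/2}n$ budgets of the statement.
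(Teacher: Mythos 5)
Your proposal is not a proof; it is an outline whose central step is unproved, and you say as much yourself. The reduction to Theorem~\ref{1+theta} in the small-doubling range and the peeling of a single isolated outlier are fine, but the entire weight of the theorem rests on the inequality $|D|\ge L_1+(1-O(\ee^{1/2}))n$ in the ``no large gap'' regime, and for that you offer only the remark that one ``must genuinely track how the between-cluster differences accumulate disjointly from the within-cluster ones.'' That inequality is essentially a restatement of the robust $3k-4$ diameter bound one is trying to prove; the naive union of $v-N(v)$ and $N(u)-u$ gives only $(2-O(\ee^{1/2}))n$ (and in the extremal two-cluster configuration these two sets coincide, so their union carries no extra information), and no mechanism is proposed for extracting the additional $L_1-n$ elements from the ``dense throughout $[-L,L]$'' heuristic. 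Without this the argument does not close.

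There are also two smaller problems in the branch you do think is routine. First, the peeling step handles an isolated \emph{point}, but the real danger is an isolated \emph{cluster} of $\Theta(n)$ elements far from the rest; deleting one element at a time does not make its restricted difference set drop below the $(1+\theta)$ threshold, so the iteration need not terminate inside the budget. Second, the condition ``gap much larger than $n$'' is not sufficient to guarantee that $v-N(v)$ and $N(v)-v$ are disjoint from the differences of the bulk: you need the gap to exceed the \emph{diameter} of the bulk, which may itself be $\gg n$ in this intermediate range.

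The paper does something structurally different, and more robust. Theorem~\ref{best 3} is deduced from Proposition~\ref{main prop A-A}, which follows the Lev--Smeliansky scheme of the proof of Proposition~\ref{main prop A+B} with $B=\ell-A$: reduce modulo $\ell=\operatorname{diam}(A)$, show that the projected restricted sumset is $(2K,s)$-regular, and then either (i) the projection fills $\Z/\ell\Z$, giving $\ell+n-2s$ after lifting, or (ii) it does not, in which case one applies a robust Kneser-type lemma, or (iii) it equals $\wt A+\wt B$ and one runs Kneser's theorem together with a careful three-case count against the stabilizer $H$. The specific gain in the $B=-A$ case comes from Proposition~\ref{kneser 3}, a short combinatorial lemma: if $A-_\Gamma A\ne A-A$, then picking a missing difference $a-a'$ with $r_{A-A}(a-a')<K$ and comparing $a+N(a)$ with $a'+N(a')$ forces $|A-_\Gamma A|>2n-K-2s$, beating the golden-ratio bound of Proposition~\ref{kneser theta}. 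That lemma is the concrete replacement for the hand-waving in your ``no large gap'' step, and the passage to $\Z/\ell\Z$ is precisely what converts ``$D$ is dense throughout $[-L,L]$'' into a counting statement. If you want to pursue your route, the inequality $|D|\ge L_1+(1-O(\ee^{1/2}))n$ would still have to be proved, and the natural way to do it is exactly the modular argument the paper uses.
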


As an application to Theorem~\ref{1+theta}, we prove a structure theorem for sets $A$ with the quantity
\[ C(A):= \frac{\# \{(a,a') \in A \times A : a + a ' \in A \}  }{|A|^2} \]
being close to its maximum value $3/4 + o(1)$. The fact that $C(A)$ is maximized when $A$ is a symmetric interval around the origin can be proved by a simple combinatorial argument; see~\cite[Proposition 2.1]{Carries}. The following result asserts that if $C(A)$ is close to $3/4$, then $A$ must be close to a symmetric arithmetic progression around the origin.

\begin{theorem}\label{carry}
Let $\ee \in (0, 2^{-50})$. Let $A \subset \mathbb{Z}$ be a subset with  $|A| = n \geq \ee^{-1}$ and $C(A) \ge 3/4 - \ee$. Then there exists an arithmetic progression $P$ containing $0$ and centered at $0$ such that  $|P|\le (1 + 280\ee^{1/4})n $ and $|P \cap A| \ge (1-10\ee^{1/4})n$.
\end{theorem}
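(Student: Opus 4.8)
My plan is to reduce to Theorem~\ref{best 3}. Since $C(A)$, the hypothesis $C(A)\ge 3/4-\ee$, and the conclusion are all invariant under $A\mapsto A/d$, I may assume the common difference $d:=\gcd(A-A)$ equals $1$, reinstating the dilation by $d$ in the final progression. Relabelling the triples $(a,b,c)\in A^{3}$ with $a+b=c$ gives the identity
\[
C(A)\,n^{2}=\#\{(a,b,c)\in A^{3}:a+b=c\}=\#\{(a,b)\in A\times A:\ a-b\in A\},
\]
so $a-b\in A$ holds for at least a $(3/4-\ee)$-fraction of the pairs $(a,b)\in A\times A$.

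\emph{Step 1 (the crux): a stability version of $C(A)\le 3/4$.} I would first show there is an absolute constant $K$ and an interval $J$ with $|J|\le(1+K\ee^{1/2})n$ and $|A\cap J|\ge(1-K\ee)n$. The bound $C(A)\le 3/4+o(1)$ of \cite[Proposition~2.1]{Carries} rests on the identity (with $\alpha=\min A$, $\beta=\max A$, $r_{A+A}(c)=\#\{(a,b)\in A^{2}:a+b=c\}$)
\[
C(A)n^{2}=n^{2}-\#\{(a,b)\in A^{2}:a+b>\beta\}-\#\{(a,b)\in A^{2}:a+b<\alpha\}-\!\!\sum_{c\in[\alpha,\beta]\setminus A}\!\!r_{A+A}(c),
\]
together with a compression argument showing that the three subtracted quantities total at least $\tfrac14 n^{2}-O(n)$, with near-equality only when $A$ is close to a symmetric interval about $0$. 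Running this quantitatively, $C(A)\ge 3/4-\ee$ bounds each subtracted term by $(\tfrac14+\ee)n^{2}$, and forces all but $O(\ee n)$ elements of $A$ into an interval barely longer than $n$, which is the claimed $J$. I expect this step to carry essentially all of the difficulty: everything else is bookkeeping, but getting the \emph{quantitatively correct} structure here (proportion $1-O(\ee)$ of $A$, length $(1+O(\ee^{1/2}))n$) is delicate and is what ultimately produces the exponent $1/4$ in the conclusion.

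\emph{Step 2: the robust $3k-4$ theorem.} Put $T=|J|$ and $\Gamma=\{(a,b)\in A\times A:|a-b|\le T\}$. Every pair with both entries in $J$ lies in $\Gamma$, so $|\Gamma|\ge|A\cap J|^{2}\ge(1-2K\ee)n^{2}$, while $A-_{\Gamma}A=(A-A)\cap[-T,T]$ has at most $2T+1=(2+O(\ee^{1/2}))n<(3-11(2K\ee)^{1/2})n$ elements once $\ee$ is below an absolute constant. Theorem~\ref{best 3}, applied with its $\ee$ taken to be $2K\ee$ (the size hypothesis following from $n\ge\ee^{-1}$), produces an arithmetic progression $Q$ with
\[
|Q|\ \le\ |A-_{\Gamma}A|-(1-5(2K\ee)^{1/2})n\ \le\ (1+O(\ee^{1/2}))n,
\qquad
|A\cap Q|\ \ge\ (1-(2K\ee)^{1/2})n .
\]
(Alternatively Theorem~\ref{1+theta} applied to $A$ and $-A$ would also do, with a weaker numerical constant.) Deleting the $O(\ee^{1/2})n$ elements of $A\setminus Q$ destroys at most $O(\ee^{1/2})n\cdot 3n$ triples, so $C(A\cap Q)\ge C(A)-O(\ee^{1/2})\ge 3/4-O(\ee^{1/2})$.

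\emph{Step 3: centring at $0$ and conclusion.} Write $Q=q_{0}+d'\{0,1,\dots,L-1\}$ for its common difference $d'$; since $C(A\cap Q)>0$ forces $d'\mid q_{0}$, I may write $Q=d'\cdot I$ with $I$ a block of $L$ consecutive integers and $A\cap Q=d'\cdot\widetilde A$, $\widetilde A\subseteq I$ of density $1-O(\ee^{1/2})$. A direct computation for sets dense in an integer interval gives $C(\widetilde A)=\tfrac34-c_{0}^{2}/L^{2}+O(\ee^{1/2})$, where $c_{0}$ is the offset of the midpoint of $I$ from $0$; comparing with $C(\widetilde A)=C(A\cap Q)\ge 3/4-O(\ee^{1/2})$ gives $|c_{0}|\le O(\ee^{1/4})L\le O(\ee^{1/4})n$. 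Hence $I$ sits inside a symmetric block of integers $I'\ni 0$ with $|I'|\le L+2|c_{0}|=(1+O(\ee^{1/4}))n$, and $P:=d'\cdot I'$ is a progression containing $0$, centred at $0$, with $|P|\le(1+O(\ee^{1/4}))n$ and $A\cap P\supseteq A\cap Q$, so $|A\cap P|\ge(1-O(\ee^{1/4}))n$. Dilating back by $d$ yields the progression required by Theorem~\ref{carry}; tracking the constants through Steps~1--3 is what pins down the explicit bounds $|P|\le(1+280\ee^{1/4})n$ and $|P\cap A|\ge(1-10\ee^{1/4})n$. The one genuinely hard ingredient is Step~1.
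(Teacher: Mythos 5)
Your proposal takes a genuinely different route from the paper, but the route has a gap where you yourself say all the difficulty lives, and that gap is not filled.

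The paper does \emph{not} try to prove the single "interval stability" statement that you call Step~1. Instead it splits $A$ into its positive part $A_1$ and negative part $A_2$, observes via Lemma~\ref{counting} that $C(A_i)<1/2$, and uses $C(A)\ge 3/4-\ee$ to force $n_1\approx n_2\approx n/2$ and $C(A_i)\ge 1/2-O(\ee^{1/2})$. It then applies Lemma~\ref{lem:carry-pos} (which \emph{is} an application of the robust $3k-4$ theorem, to a one-sided set) to each of $A_1$ and $-A_2$, producing progressions $P_1,P_2$ through $0$, and finally glues them by showing (Lemma~\ref{lem:d1=d2}, a direct counting argument) that $P_1$ and $P_2$ must share the same common difference. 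The centring at $0$ then comes for free from the symmetry of $n_1\approx n_2$, with no analogue of your Step~3 needed.

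The concrete problem with your proposal is Step~1. You assert that the compression argument behind $C(A)\le 3/4+o(1)$ from~\cite[Proposition~2.1]{Carries} can be "run quantitatively" to produce an interval $J$ with $|J|\le (1+K\ee^{1/2})n$ and $|A\cap J|\ge(1-K\ee)n$, but you offer no argument for this. Stability for compression-type inequalities is genuinely delicate and does not come for free from the proof of the inequality itself; here it would amount to a complete structure theorem for near-extremal sets, which is essentially what Theorem~\ref{carry} is. As written, Step~1 is a restatement of (a strengthening of) the target, not a reduction of it. In fact, if Step~1 held with the exponents you wrote, your Steps~2--3 would give the theorem with the stronger error term $O(\ee^{1/2})$ in the density, so either those exponents are not attainable by elementary compression or the theorem as stated is far from optimal; neither possibility is addressed.

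A further structural symptom: given Step~1, your Step~2 is superfluous. With $d=1$ normalised and $J$ an interval of length $(1+O(\ee^{1/2}))n$ containing all but $O(\ee n)$ of $A$, you can feed $A\cap J\subset J$ directly into your Step~3 centring computation; invoking Theorem~\ref{best 3} with the artificial graph $\Gamma=\{|a-b|\le T\}$ only re-derives (with loss) what $J$ already gives you. That the robust $3k-4$ theorem plays no essential role in your argument is a sign the plan has drifted: in the paper's proof, Theorem~\ref{1+theta} (through Lemma~\ref{lem:carry-pos}) is exactly where the structural information is extracted from the hypothesis, and the splitting into $A_1,A_2$ is what makes that extraction possible without already knowing the conclusion. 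Your Step~3 computation $C(\widetilde A)=\tfrac34-c_0^2/L^2+O(\ee^{1/2})$ is correct and could serve as an alternative to the paper's gluing lemma, but it cannot rescue the missing Step~1.
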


We deduce Theorems~\ref{1+theta} and~\ref{best 3} in Section~\ref{sec:tech-statement} from two technical propositions, Proposition~\ref{main prop A+B} and Proposition~\ref{main prop A-A}. In Section~\ref{sec:abelian-group}, we develop robust analgoues of Kneser's theorem in abelian groups, which are used in Section~\ref{sec:proofs} to deduce the technical propositions. In Section~\ref{sec:other-robust}, we relate our main result with similar types of results in the literature~\cite{Matomaki,Mazur,Shao}. Section~\ref{sec:carry} contains the deduction of Theorem~\ref{carry} from Theorem~\ref{1+theta}, and finally in Section~\ref{sec:continuous-carry} we deduce a continuous version of Theorem~\ref{1+theta}, which is a special case of previous works on near equality in the Riesz-Sobolev inequality~\cite{Christ0,Christ1,Christ2,Christ3}.

\subsection*{Acknowledgement} We thank Ben Green for drawing our attention to the reference~\cite{Lev-2}.

\section{Technical version of the main theorems}\label{sec:tech-statement}

Before stating the technical versions of our main theorems, we need the following definition which also appeared in~\cite{Lev-theta}.

\begin{definition}
Let $A$, $B$ be two finite sets (in an ambient abelian group $G$), and let $\Gamma \subset A \times B$. We say that $A+_{\Gamma}B$ is $(K,s)$-regular if the following two conditions hold:
\begin{enumerate}
\item for each $a \in A$ there are at most $s$ values of $b\in B$ with $(a,b)\notin\Gamma$, and similarly for each $b \in B$ there are at most $s$ values of $a \in A$ with $(a,b) \notin\Gamma$;
\item if $r_{A+B}(x)\geq K$, then $x \in A+_{\Gamma}B$.
\end{enumerate}
\end{definition}

Here $r_{A+B}(x)$ denotes the number of ways to write $x=a+b$ with $a \in A$ and $b \in B$. 
 
\begin{proposition}\label{main prop A+B}
Let $\ell$ be a positive integer and let $A, B \subset \{0,1,\cdots,\ell\}$ be subsets with size $ |A|= |B| = n \ge 3$. Assume that $0,\ell\in A$, $0 \in B$ and $\gcd (A \cup B)= 1$. Let $\Gamma \subset A \times B$ be a subset such that $A+_{\Gamma}B$ is $(K, s)$-regular for some $K \geq 2$ and $s \geq 0$.
 Then 
\[  |A +_\Gamma B | \geq \begin{cases} \ell +n -2s & \text{if }\ell < 2n -2K -2, \\  (\theta + 1)n - 4s -2 K - \theta & \text{if }\ell \ge 2n -2K -2,\end{cases} \] 
where $\theta = \frac{1 +\sqrt{5}}{2}.$
\end{proposition}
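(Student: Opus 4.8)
My plan is to reduce the robust statement (with $\Gamma$ a large subset of $A\times B$ and the regularity hypothesis) to an application of the classical Freiman $3k-4$ theorem, or rather to a ``robust Kneser'' statement in $\Z$, by exploiting the two defining properties of $(K,s)$-regularity. First I would set $C := A+_\Gamma B$ and note that property (2) forces $C$ to contain every $x$ with $r_{A+B}(x)\ge K$; in particular $C$ contains all of the ``interior'' of $A+B$ where representation numbers are automatically large, and it can only fail to contain points near the two ends $0$ and $\ell+(\text{diam }B)$ of the Minkowski sumset. Since $0,\ell\in A$ and $0\in B$, the sumset $A+B$ contains $0$ and $\ell$, so $C$ lives inside $\{0,1,\dots,\ell+\operatorname{diam}(B)\}$; the coprimality hypothesis $\gcd(A\cup B)=1$ rules out degenerate lattice obstructions. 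The key quantitative input will be a lower bound of the shape $|C|\ge |A+B| - (\text{error in terms of }s,K)$, so that it suffices to prove the two-case lower bound for the \emph{complete} sumset $|A+B|$ and then lose only $O(s)+O(K)$ in passing to $C$.

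The second step is the lower bound for $|A+B|$ itself. Here the two regimes in the statement are exactly the two regimes of the Freiman/Lev--style analysis: when $\ell<2n-2K-2$ the set is ``not too spread out'' and one expects the clean bound $|A+B|\ge \ell+n-1$ (the $3k-4$-type conclusion, losing $2s$ to pass to the restricted sumset), whereas when $\ell\ge 2n-2K-2$ the set is spread out and the right bound is governed by the golden ratio $\theta=\tfrac{1+\sqrt5}2$, giving $|A+B|\ge(\theta+1)n - O(K)$, again losing $O(s)$ for the restriction. I would obtain these by an induction on $n$ (or on $\ell$): pick a nonzero element, project/quotient by an appropriate subgroup or look at $A$ and $B$ intersected with congruence classes, and use the trivial bound $|A+B|\ge |A|+|B|-1$ as the base of the recursion together with the standard Freiman ``if the sumset is small then $A,B$ fit in short progressions'' dichotomy. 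The appearance of $\theta$ suggests that the extremal configuration in the spread-out case is built from a geometric-type recursion (two blocks whose sizes are in golden ratio), and the constant $\theta+1$ should come out of solving the associated recursion $f(n)\ge f(n') + (\text{gain})$ with the worst split.

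The main obstacle, I expect, is bookkeeping the interaction between the regularity parameters $s$ and $K$ and the induction, rather than any single deep idea: one must ensure that when passing to a subconfiguration (a congruence class, or $A$ minus its extreme points) the induced restricted sumset is still $(K',s')$-regular with parameters good enough that the error terms $-2s$, $-4s-2K$ telescope correctly and do not blow up. In particular the precise thresholds $\ell \lessgtr 2n-2K-2$ and the exact coefficient $4$ in $-4s-2K$ will have to be tuned so that the two cases match up at the boundary and so that the base case is consistent. A secondary technical point is handling the coprimality reduction: after quotienting one may lose the $\gcd=1$ hypothesis, so the induction should be set up to carry a hypothesis about the common difference, or one applies Proposition~\ref{main prop A+B} only after a normalization (Freiman isomorphism) that restores $\gcd(A\cup B)=1$. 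I would isolate the clean combinatorial core — a lower bound for $|A+B|$ in $\Z$ under $0,\ell\in A$, $0\in B$, $\gcd=1$ — as a lemma, prove it by the golden-ratio recursion, and then in a short second step feed in properties (1) and (2) of $(K,s)$-regularity to descend from $A+B$ to $A+_\Gamma B$ at the stated cost.
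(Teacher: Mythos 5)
Your central reduction does not work, and it is worth seeing exactly why, because the failure is the whole reason the proposition is nontrivial. You propose to prove a bound of the shape $|A+_\Gamma B|\ge |A+B|-(\text{error in }s,K)$ and then lower-bound the complete sumset $|A+B|$. But property (2) of $(K,s)$-regularity only forces $x$ into $A+_\Gamma B$ when $r_{A+B}(x)\ge K$, and there is no reason for representation numbers to be ``automatically large in the interior'': for a spread-out set (say $A=B=\{0,1,\dots,n-2\}\cup\{\ell\}$ with $\ell$ huge, after fixing $\gcd$) a positive proportion of $A+B$ consists of elements with $r_{A+B}(x)\le 2$, and property (1) only caps the total number of missing pairs by $ns$, so the best bound obtainable along your lines is $|A+_\Gamma B|\ge |A+B|-ns$, which is useless. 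Indeed the restricted sumset genuinely can be of size only about $(\theta+1)n$ while $|A+B|$ is much larger --- that is precisely why the second case of the statement reads $(\theta+1)n-4s-2K-\theta$ and not $|A+B|-O(s+K)$. Your secondary guess, that $\theta$ arises from solving a golden-ratio recursion in an induction on $n$ or $\ell$, is also not substantiated: you give no recursion, no inductive step preserving $(K,s)$-regularity, and no base case, so there is no argument to check there.

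The actual mechanism is different. One reduces modulo $\ell$ (as in Lev--Smeliansky): the images $\wt A,\wt B\subset\Z/\ell\Z$ satisfy $|\wt A|=n-1$, the reduced restricted sumset $\wt A+_{\wt\Gamma}\wt B$ is $(2K,s)$-regular, and since $0,\ell\in A$ one gains $|A+_\Gamma B|\ge |\wt A+_{\wt\Gamma}\wt B|+n-2s$. If $\ell<2n-2K-2$ then every element of $\Z/\ell\Z$ has $r_{\wt A+\wt B}>2K$, so $\wt A+_{\wt\Gamma}\wt B=\Z/\ell\Z$ and the first bound follows. Otherwise one splits according to whether $\wt A+_{\wt\Gamma}\wt B$ equals $\wt A+\wt B$. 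If not, a robust Kneser-type proposition in the abelian group gives $|\wt A+_{\wt\Gamma}\wt B|>\theta(n-1)-2K-2s$; there $\theta$ enters through a pigeonhole step (a popular sum $\sigma$ with $r_{A+B}(\sigma)\ge(\theta-1)n$, using $\theta^{-1}=\theta-1$) combined with showing $(A-A)\cup(B-B)+\sigma$ lands in the restricted sumset --- not through a recursion. If the two reduced sumsets coincide, one applies Kneser's theorem, lets $H$ be the stabilizer of $\wt A+\wt B$, and counts preimages in $A+_\Gamma B$ coset by coset of $H$. None of these three ingredients appears in your outline, so the proposal has a genuine gap rather than being an alternative proof.
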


In the special case when $A = B$, this becomes~\cite[Theorem 1]{Lev-theta}. In fact, our proof will follow arguments from~\cite{Lev-theta} with modifications to accommodate the asymmetric nature of our result. While Proposition~\ref{main prop A+B} is the technical version of Theorem~\ref{1+theta}, the following proposition corresponds to Theorem~\ref{best 3}.

\begin{proposition}\label{main prop A-A}
Let $\ell$ be a positive integer and let $A \subset \{0,1,\cdots,\ell\}$ be a subset with size $|A|=n$. Assume that $0,\ell \in A$ and  $\gcd(A)=1$. Let $\Gamma \subset A \times A$ be a subset such that $A-_{\Gamma}A$ is $(K,s)$-regular for some $K\geq 2$ and  $s \geq 0$.
Then
\[  |A-_{\Gamma}A| \geq \begin{cases} \ell + n - 2s & \text{if }\ell < 2n - 2K -2, \\ 3n - 4s - 2K -2 & \text{if }\ell \geq 2n - 2K - 2. \end{cases} \]
\end{proposition}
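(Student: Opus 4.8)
The plan is to mimic the proof of Proposition~\ref{main prop A+B} specialized to $B = A$, but exploiting the extra symmetry of the difference set to push the constant from $\theta+1$ up to $3$. First observe that $A -_{\Gamma} A$ is symmetric up to replacing $\Gamma$ by its transpose: if $(a,a') \in \Gamma$ then $a - a' \in A -_{\Gamma} A$, and one would like $a' - a$ to lie there too. This is not automatic, so the first step is a symmetrization: replace $\Gamma$ by $\Gamma' = \Gamma \cap \Gamma^{T}$ where $\Gamma^T = \{(a',a) : (a,a') \in \Gamma\}$; then $A -_{\Gamma'} A$ is a symmetric subset of $A -_{\Gamma} A$, and one must check that the $(K,s)$-regularity degrades only mildly — each $a$ now misses at most $2s$ partners, so $A -_{\Gamma'} A$ is $(K, 2s)$-regular. (One may need to track this factor of $2$ through the final bound, or argue more carefully that the relevant count of popular differences is preserved.) Having done this, we may assume $D := A -_{\Gamma} A = -D$ and work with $D^{+} := D \cap \Z_{\ge 0}$, so that $|D| = 2|D^+| - 1$ since $0 \in D$.

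Next I would run the same dichotomy as in Proposition~\ref{main prop A+B}. Translating so that $A \subset \{0,1,\dots,\ell\}$ with $0, \ell \in A$ and $\gcd(A) = 1$, the set $D^+$ lives in $\{0,1,\dots,\ell\}$ and contains $0$ and $\ell$ (the latter because $(\ell, 0)$ and $(0,\ell)$ give $\pm \ell$, modulo the regularity caveat that $\ell$ is a popular difference — here one uses that $r_{A-A}(\ell) \ge$ something, or that $\ell$ being the diameter forces it into any regular difference set; this needs care since $r_{A-A}(\ell)$ could be as small as $1$). Assuming this is handled, $D^+$ is a subset of an interval of length $\ell$ containing both endpoints with $\gcd = 1$. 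Now apply a Freiman-type / Kneser-type bound in one dimension to $D^+$: by the structure of $A$ one shows $A - A$ contains many elements, and in the regime $\ell < 2n - 2K - 2$ a direct argument gives $|D^+| \ge \ell + \lceil (n-1)/1 \rceil - \text{(error)}$, i.e. $|D^+| \ge \ell + n - 1 - 2s$ or so; then $|D| = 2|D^+| - 1 \ge 2\ell + 2n - \dots$, which is much stronger than the claimed $\ell + n - 2s$, so that case is easy. The content is the regime $\ell \ge 2n - 2K - 2$, where one wants $|D| \ge 3n - 4s - 2K - 2$, equivalently $|D^+| \ge \tfrac{3n}{2} - 2s - K - \tfrac12$.

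For the main regime, the key step is to lower bound $|D^+|$ by roughly $n + \tfrac{n}{2}$: the "$n$" comes from the fact that $D^+ \supset (A - A) \cap \Z_{\ge 0}$ contains a translate-free image of $A$ itself (since $\ell - a \in D^+$ for each popular $a$, using $(\ell, a)$ or the regularity of $\ell - a$), giving $n - O(s)$ elements in $[0,\ell]$; the extra "$n/2$" must come from a second family of differences that is nearly disjoint from the first. I expect this to be the main obstacle: identifying the second source of differences and controlling the overlap. The natural candidate is to pick an element $d \in A$ with $d \approx \ell/2$ (which exists once $A$ is not too clustered, using the Kneser-type robust results of Section~\ref{sec:abelian-group}) and consider $(A \cap [0,d]) - (A \cap [0,d])$ together with the translate by $\ell - d$; summing the sizes and subtracting the overlap with the first family should yield the $\tfrac32 n$ bound. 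The bookkeeping of the $s$ and $K$ error terms through this two-family argument — especially ensuring the overlap is bounded by $O(s) + O(K)$ and not something larger — is where the real work lies, and it is exactly the place where the proof of Proposition~\ref{main prop A+B} must be modified rather than quoted verbatim. Once the two-family bound is in place, the stated inequality follows by the identity $|D| = 2|D^+| - 1$ and a routine rearrangement.
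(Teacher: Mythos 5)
Your proposal is not correct, and it also misses the much shorter route the paper takes. The paper's proof of Proposition~\ref{main prop A-A} is essentially a one-line observation: set $B = \ell - A$ and rerun the proof of Proposition~\ref{main prop A+B} verbatim, replacing the single invocation of Proposition~\ref{kneser theta} (which gives $|\wt{A}+_{\wt{\Gamma}}\wt{B}| > \theta(n-1) - 2K - 2s$) with Proposition~\ref{kneser 3} (which gives $2(n-1) - 2K - 2s$); every other step — the reduction modulo $\ell$, the $(2K,s)$-regularity of the reduced sumset, the bound~\eqref{eq:A-to-Atilde}, and the three-case analysis with the stabilizer $H$ — goes through unchanged. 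The improvement from $\theta+1$ to $3$ lives entirely inside Proposition~\ref{kneser 3}, and no new combinatorics on the integer side is needed.

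Your route has several genuine gaps. First, symmetrizing $\Gamma$ to $\Gamma' = \Gamma \cap \Gamma^T$ does not merely degrade condition (1) of $(K,s)$-regularity to $(K,2s)$; it can destroy condition (2) outright. If $r_{A-A}(x) \ge K$ you know $x \in A-_{\Gamma}A$, so some $(a,a') \in \Gamma$ has $a-a' = x$, but you have no control over whether $(a',a) \in \Gamma$; none of the $\ge K$ representing pairs need survive the intersection. So the reduction to a symmetric $D$ with $|D| = 2|D^+|-1$ is not justified. Second, your claimed bound in the regime $\ell < 2n-2K-2$, namely $|D^+| \ge \ell + n - 1 - 2s$, is impossible whenever $n - 1 - 2s > 1$, since $D^+ \subset \{0,\dots,\ell\}$ forces $|D^+| \le \ell+1$; the actual argument in this regime (as in the paper) shows the reduced difference set covers all of $\Z/\ell\Z$ and then invokes~\eqref{eq:A-to-Atilde}, giving precisely $\ell + n - 2s$, not something ``much stronger.'' Third, the core of your plan — the ``two-family argument'' producing the extra $n/2$ differences with $O(s)+O(K)$ overlap — is stated only as an aspiration; you identify it yourself as ``where the real work lies,'' and it is never carried out. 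Without that step the proposal does not constitute a proof. I would recommend abandoning the $D^+$ approach entirely and instead tracing through the proof of Proposition~\ref{main prop A+B} with $B = \ell - A$, noting that the only change required is the substitution of Proposition~\ref{kneser 3} for Proposition~\ref{kneser theta}.
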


The reason that we are able to get a better result in the case when $A = -B$ stems from a special combinatorial argument (see Proposition~\ref{kneser 3} below) that is not available in the general setting.

We will prove Propositions~\ref{main prop A+B} and~\ref{main prop A-A} in Section~\ref{sec:proofs}, after developing the essential ingredient in Section~\ref{sec:abelian-group} about restricted sumsets in abelian groups. In the remainder of this section, we deduce Theorems~\ref{1+theta} and~\ref{best 3} from Propositions~\ref{main prop A+B} and~\ref{main prop A-A}, respectively.

\begin{proof}[Proof of Theorem~\ref{1+theta} assuming Proposition~\ref{main prop A+B}]
Since $ |\Gamma |\ge (1- \ee)n^2$, for every $a \in A$ outside an exceptional set of size at most $\ee^{1/2}n$, we have $(a,b) \in \Gamma$ for all but at most $\ee^{1/2}n$ values of $B$. Thus we can find $A' \subset A$ and $B' \subset B$ such that $|A'| = |B'| \ge (1- \ee^{1/2})n$ and for any $a' \in A'$ we have 
\[\# \{b'\in B' \colon (a',b') \notin \Gamma\} \le \ee^{1/2}n.\]
Let $\Gamma^* = (A' \times B') \cap \Gamma$. Set
$s = \ee^{1/2}n$ and $K = \ee^{1/2}n$, and take
\[ \Gamma' = \Gamma^* \cup \{(a',b') \in A'\times B' \colon r_{A'+B'}(a'+b') \geq K\}. \]
Then by construction $A'+_{\Gamma'}B'$ is $(K,s)$-regular. After translating and dilating appropriately so that the smallest elements in $A'$ and $B'$ are both $0$ and $\text{gcd}(A' \cup B') = 1$,  let $\ell$ be the largest element in $A' \cup B'$. By Proposition~\ref{main prop A+B} we have
\[  |A' +_{\Gamma'} B' | \geq \begin{cases} \ell + |A'| - 2s & \text{if }\ell < 2|A'| - 2K - 2, \\  (\theta + 1)|A'| - 4s - 2K - \theta  & \text{if }\ell \ge 2|A'| - 2K - 2. \end{cases}
\] 
Since $|A'| \geq (1-\ee^{1/2})n$ and $n \geq 2\ee^{-1/2}$, the inequalities above lead to
\[ |A' +_{\Gamma'} B'| \geq \min(\ell + (1-3\ee^{1/2})n, (\theta+1-10\ee^{1/2})n). \]
Next we claim that
\[ |A+_{\Gamma}B| \geq |A'+_{\Gamma'}B'| - \ee^{1/2}n. \]
To see this, note that $A'+_{\Gamma^*}B' \subset A+_{\Gamma}B$, and that if $x \in (A'+_{\Gamma'}B') \setminus (A'+_{\Gamma^*}B')$ then $r_{A'+B'}(x) \geq K$ by the definition of $\Gamma'$, so that the number of such $x$ is at most
\[  \frac{|(A'\times B')\setminus \Gamma^*|}{K} \le \frac{|(A\times B)\setminus\Gamma|}{K} \leq \ee^{1/2}n.      \]
It follows that
\[ |A+_{\Gamma}B| \geq \min(\ell + (1-4\ee^{1/2})n, (\theta+1-11\ee^{1/2})n).  \]
By our assumption on the size of $A+_{\Gamma}B$, we must have
\[ |A+_{\Gamma}B| \geq \ell + (1-4\ee^{1/2})n. \]
Setting $P$ and $Q$ to be suitably translated and dilated copies of the interval $\{0,1,2,\cdots,\ell\}$ (so that $A' \subset P$ and $B' \subset Q$), we see that they have lengths
\[ \ell + 1 \leq |A+_{\Gamma}B| - (1-5\ee^{1/2})n, \]
and moreover $|A \cap P| \geq |A'| \geq (1-\ee^{1/2})n$ and similarly $|B \cap Q| \geq |B'| \geq (1-\ee^{1/2})n$. This completes the proof. 
\end{proof}

\begin{proof}[Proof of Theorem~\ref{best 3} assuming Proposition~\ref{main prop A-A}]
The proof of Theorem~\ref{best 3} is almost the same as the proof of Theorem~\ref{1+theta}. The only difference is that we apply Proposition~\ref{main prop A-A} intead of Proposition~\ref{main prop A+B} in the final step to complete our proof. 
\end{proof}

\section{Robust results in the abelian group setting}\label{sec:abelian-group}

In this section we prove robust analogues of Kneser's theorem in abelian groups. Our first result is an extension of~\cite[Theorem 2]{Lev-theta} to two distinct sets $A,B$.

\begin{proposition}\label{kneser theta}
Let $G$ be an abelian group and let $A, B\subset G$ be subsets with $n = |A| \le |B|$. Let $\Gamma \subset A \times B$ be a subset such that $A+_{\Gamma}B$ is $(K,s)$-regular for some $K,s \geq 0$. Suppose that $A +_\Gamma B \neq A + B$. Then 
\[ |A + _\Gamma B| > \theta n - K -2s,  \]
where $\theta = \tfrac{1 + \sqrt{5}}{2}$.     
\end{proposition}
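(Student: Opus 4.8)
This is a robust version of Kneser's theorem (or more precisely, the Cauchy–Davenport/Kneser-type bound with constant $\theta$ rather than $2$), so the natural approach is to mimic the classical proof via the Kneser stabilizer, adapted to the restricted-sumset setting. Write $S = A +_\Gamma B$ and let $H = \{g \in G : g + S = S\}$ be the stabilizer of $S$; this is a finite subgroup of $G$. The strategy is to pass to the quotient $G/H$, apply a base case there, and then use the regularity hypothesis to control how much mass of $A$ and $B$ lives in each $H$-coset.

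First I would dispose of the easy case $H = \{0\}$ (or $|H|$ small). Here $S$ has trivial stabilizer, and one expects a Cauchy–Davenport-type inequality $|S| \geq |A| + |B| - 1 - (\text{error from }\Gamma)$; since $|B| \ge n$, this already gives $|S| \gtrsim 2n$, which beats $\theta n$ comfortably (note $\theta < 2$) provided the error terms $K, s$ are absorbed — and the hypothesis $A +_\Gamma B \neq A+B$ is exactly what one needs to rule out the degenerate equality case and gain the strict inequality. The subtlety is that in the restricted setting one does not get the full $|A|+|B|-1$; instead, using $(K,s)$-regularity, each popular sum (with $r_{A+B} \ge K$) lies in $S$, and the number of non-popular sums is controlled, so one recovers a bound like $|S| \ge |A| + |B| - O(K) - O(s)$ — I would need to make this precise, likely by the standard argument of ordering elements and counting, or by invoking a restricted Cauchy–Davenport inequality.

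Next, the case $|H| \ge 2$. Let $\phi : G \to G/H$ be the quotient map, and let $\bar A = \phi(A)$, $\bar B = \phi(B)$, $\bar S = \phi(S)$. Since $S$ is a union of $H$-cosets, $|S| = |H| \cdot |\bar S|$. The key structural point is that $S \neq A+B$ forces, in the quotient, that $\bar S$ is "full" relative to $\bar A + \bar B$ in a way that gives $|\bar S| \ge |\bar A| + |\bar B| - 1$ — this is where Kneser's theorem (or the robust analogue one is building) enters, possibly recursively. Then, writing $a_i$ for the number of elements of $A$ in the $i$-th relevant $H$-coset and similarly $b_j$, one has $n = |A| = \sum a_i$ with at most $|\bar A|$ nonzero terms each $\le |H|$, and the regularity hypothesis means that for a coset where $a_i + b_j$ is large (close to $|H|$), the corresponding $H$-coset sum is entirely popular and hence in $S$. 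The goal is to combine $|S| = |H||\bar S| \ge |H|(|\bar A| + |\bar B| - 1)$ with the coset-occupancy bounds $|\bar A| \ge n/|H|$, $|\bar B| \ge n/|H|$ to conclude $|S| \ge 2n - |H| - (\text{errors})$, again beating $\theta n$; the worst case for the constant $\theta$ arises in some intermediate regime where $|H| \approx n$, and this is presumably where the precise value $\theta = \frac{1+\sqrt 5}{2}$ comes from, via optimizing a quadratic-type inequality (the golden ratio satisfies $\theta^2 = \theta + 1$, which strongly suggests the extremal configuration balances a linear term against a product/quadratic term — exactly the kind of trade-off between $|H|$ and $|\bar S|$ one sees here).

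**Main obstacle.** The hard part is not the case analysis itself but tracking the error terms $K$ and $s$ through the stabilizer/quotient argument while keeping the leading constant exactly $\theta$ rather than something weaker. In the classical proof everything is exact; here, passing to the quotient and then returning requires that "popular in $G$" correctly translates to "popular in $G/H$" (an element of $G/H$ whose fibre sum is entirely popular), and the bookkeeping of how many cosets are "light" (few elements of $A$ or $B$) versus "heavy" must be done so that the light cosets contribute a controlled deficit. Getting the coefficient of $\ee^{1/2}n$ small enough downstream in Theorem~\ref{1+theta} means the $K$ and $2s$ here must appear with exactly the stated coefficients, so I would expect to spend most of the effort on an inequality of the form: if $|S| \le \theta n - K - 2s$ then deriving a contradiction by showing $A +_\Gamma B = A + B$ after all (since every sum is then forced to be popular or to lie in too small a set). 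In other words, I would likely run the argument contrapositively: assume the bound fails, and show the restricted sumset must equal the full sumset.
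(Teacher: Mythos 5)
Your proposal takes a fundamentally different route from the paper's, and unfortunately the route has serious gaps that would prevent it from yielding the result. The paper (following Lev) does \emph{not} run a stabilizer/quotient argument. Instead it argues directly by contradiction: assume $|A+_\Gamma B| \le \theta n - K - 2s$. By pigeonhole over the at-least-$(1-s/n)n^2$ pairs in $\Gamma$, some $\sigma$ has $r_{A+B}(\sigma) > (\theta-1)n$; this is exactly where the golden ratio enters, via the identity $\theta - 1 = 1/\theta$. Next, the small-$|S|$ hypothesis forces every difference $c \in B-B$ to have $r_{A-A}(c) \ge (2-\theta)n + K$ (by comparing two translates $A+b, A+b'$ inside $S$), and similarly for $A-A$ inside $B-B$. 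Combining these two overlap bounds with the popularity of $\sigma$, one shows $\sigma + (A-A) \cup (B-B) \subset A+_\Gamma B$ — the numerical coincidence here is $(2-\theta) + (\theta-1) - 1 = 0$, which again pins down $\theta$ as the positive root of $\theta^2 - \theta - 1 = 0$. Finally, $A+_\Gamma B \ne A+B$ yields some $a+b$ with $r_{A+B}(a+b) < K$, which forces $|(A-A)\cup(B-B)| > 2n - K > \theta n - K - 2s$, a contradiction.

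The specific problems with the quotient approach you sketch are as follows. (1) You take $H$ to be the stabilizer of $S = A+_\Gamma B$, but the restricted sumset has no reason to have a nontrivial stabilizer, and there is no Kneser-type theorem for restricted sumsets to invoke in the base case — Kneser's theorem concerns the \emph{full} sumset $A+B$, and its stabilizer plays the crucial role there. (2) In the $H = \{0\}$ case you appeal to ``a Cauchy–Davenport-type inequality $|S| \ge |A| + |B| - 1 - \text{error}$,'' but Cauchy–Davenport holds only for prime-order cyclic groups; in a general abelian group the full sumset $A+B$, let alone $A+_\Gamma B$, can be far smaller than $|A|+|B|-1$ even when no subgroup stabilizes $S$. (3) You never actually derive the constant $\theta = \tfrac{1+\sqrt 5}{2}$; you conjecture it arises from ``optimizing a quadratic-type inequality'' involving $|H|$ and $|\bar S|$, but no such inequality is written down, and in the paper the golden ratio emerges from an entirely different source (the pigeonhole bound $(\theta-1)n = n/\theta$ for the multiplicity of a popular sum). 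The hypothesis $A+_\Gamma B \ne A+B$ is also used differently than you suggest: it is not a nondegeneracy condition for a Kneser-type equality case, but produces a concrete witness of an unpopular sum, which combined with the derived inclusion gives the contradiction.
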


\begin{proof}
For the purpose of contradiction, assume that $|A+_\Gamma B| \le \theta n -K -2s$. The proof is split into the following four steps.

\subsection*{Step 1}
By the pigeonhole principle, there exists $\sigma \in G$ such that
\[   r_{A + B}(\sigma) \geq \frac{|\Gamma|}{|A+_{\Gamma}B|} > \frac {n^2-ns}{\theta n - K - 2s} \ge  \frac{n(n- s)}{\theta (n-s)}= (\theta - 1)         n. \] 
From now on fix such $\sigma$.

\subsection*{Step 2} 
We show that $r_{A-A}(c) \geq (2-\theta)n + K$ for each $c \in B-B$. Take $c = b'-b \in B-B$ for some $b,b' \in A$. Since $A+_{\Gamma}B$ is $(K,s)$-regular, we have
\[ |(A+b) \backslash ( A +_\Gamma B)| \le s, ~~~ |(A+b') \backslash (A +_\Gamma B)| \le s.   \]
It follows that
\[ |(A+b) \cup (A+b')| \le |A +_\Gamma B| +2s \le \theta n - K,                   \]
and thus
\[ |(A+b) \cap (A+b') | \ge 2|A| - \theta n + K = (2-\theta)n + K.                              \]
Note that each element in the intersection $(A+b) \cap (A+b')$ is of the form $a + b = a' + b'$ for some $a,a' \in A$, and this leads to a representation 
\[ c = b'-b = a - a'.         \]
Hence $r_{A-A}(c)\ge(2-\theta)n + K$ as desired. Similarly one can also show that $r_{B-B}(d) \geq 2|B|-\theta n + K$ for each $d \in A-A$.

\subsection*{Step 3} 
We claim that for the above fixed $\sigma$ and each $c \in B - B$, there are at least $K$ triples $(b, a, a') \in B \times A \times A$ such that $c = a' - a$ and $\sigma = a + b$. To see this, note that the number of $(a,a') \in A \times A$ with $c = a' - a$ is at least $(2-\theta)n+K$ by Step~2, and the number of $(a,b) \in A \times B$ with $\sigma = a+b$ is at least $(\theta-1)n$ by Step~1. Hence the number of such triples $(b,a,a')$ is at least
\[ (2-\theta)n + K + (\theta-1)n - |A| \geq K, \]
as claimed. For any such triple we have $c + \sigma = a' + b \in A+B$, and thus from the $(K,s)$-regularity assumption we deduce that $c + \sigma \in A +_\Gamma B$. This shows that $B - B + \sigma \subset A +_\Gamma B$. Similarly, for each $d \in A-A$ the number of triples $(b,b',a) \in B \times B \times A$ with $d = b'-b$ and $\sigma = a+b$ is at least
\[ 2|B| - \theta n + K + (\theta-1)n - |B| \geq K, \]
showing that $A-A+\sigma \subset A+_{\Gamma}B$ as well. Hence
\begin{equation} \label{contradiction}
|(A - A) \cup (B - B)| \le  | A +_\Gamma B |.
\end{equation}

\subsection*{Step 4}
Finally, we show that \eqref{contradiction}  is impossible. Since $A +_\Gamma B \neq A + B$, we may find $a \in A$ and $b \in B$ such that $ a + b \notin A +_\Gamma B$. Thus $r_{A+B}(a+b) < K$, which implies that 
\[ |(-a + A) \cap (b - B)| < K.                     \]  
Therefore, 
\[
\begin{split} 
|(A-A) \cup (B-B)| &\ge |(-a + A) \cup (b - B)| > 2n - K \\
&> \theta n - K - 2s \ge |A +_\Gamma B|.               
\end{split}
\]
This contradicts~\eqref{contradiction} and the proof is completed.
\end{proof}

Motivated by arguments from~\cite[Section 4]{Lev-2}, we can obtain a better bound in the special case when $B = -A$.

\begin{proposition}\label{kneser 3}
Let $G$ be an abelian group and let $A \subset G$ be a subset with $|A| = n$. Let $\Gamma \subset A \times A$ be a subset such that $A-_{\Gamma}A$ is $(K,s)$-regular for some $K,s \geq 0$. Suppose that $A-_{\Gamma}A \neq A-A$. Then
\[ |A-_{\Gamma}A| > 2n - K - 2s. \]
\end{proposition}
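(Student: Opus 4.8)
The plan is to mimic the structure of the proof of Proposition~\ref{kneser theta}, but to exploit the symmetry $B = -A$ to save in the two places where the general argument is lossy. Assume for contradiction that $|A -_{\Gamma} A| \le 2n - K - 2s$. First I would run the analogue of Step~1: by pigeonhole there is $\sigma \in G$ with
\[ r_{A-A}(\sigma) \ge \frac{|\Gamma|}{|A-_{\Gamma}A|} \ge \frac{n^2 - ns}{2n - K - 2s} \ge \frac{n(n-s)}{2(n-s)} = \frac n2, \]
using $|\Gamma| \ge n^2 - ns$ and $2n - K - 2s \le 2(n-s)$. Fix this $\sigma$; write $\sigma = a_0 - a_1$ with $a_0, a_1 \in A$.

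The key new input is a symmetric counting trick: I want to show $A - A + \sigma \subset A -_{\Gamma} A$ essentially for free, i.e. without first establishing a lower bound on $r_{A-A}(d)$ for every $d$. Indeed, for each $a \in A$, the translated set $-a + A$ and the set $A - a_1 = -a_1 + A$ both have size $n$ and both sit inside the ambient group; if I can show they overlap in at least $K$ elements for "most" $a$, then for any $d = a - a' \in A - A$ I would find a common element $a'' \in (-a + A) \cap (-a_1 + A)$, giving $a'' + a = \tilde a \in A$ and $a'' + a_1 = \hat a \in A$, hence $d + \sigma = (a - a') + (a_0 - a_1)$ should be rewritten as a difference of two elements of $A$ with multiplicity $\ge K$. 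More carefully, the honest route is: for $d \in A-A$ and the fixed $\sigma$, count triples realizing $d$ as a difference in $A$ and $\sigma$ as a difference in $A$ simultaneously; Step~1 gives $r_{A-A}(\sigma) \ge n/2$, and one needs $r_{A-A}(d)$ large too. The point of the $B=-A$ case (following \cite[Section 4]{Lev-2}) is that the Step~2 bound can be improved: from $|(A + b) \cup (A + b')| \le |A -_{\Gamma} A| + 2s \le 2n - K$ one gets $r_{A-A}(c) \ge K$ directly, and this already suffices — combined with $r_{A-A}(\sigma) \ge n/2$ via an inclusion-exclusion count of triples $(a, a', a'') \in A^3$ with $a - a' = d$, $a' - a'' = \sigma$, whose number is at least $r_{A-A}(d) + r_{A-A}(\sigma) - n \ge K + n/2 - n$... which is not obviously $\ge K$, so I would instead argue that $r_{A-A}(d) \ge 2n - |A-_{\Gamma}A| - 2s \ge K$ is already the clean statement and push $d + \sigma \in A + A$-type membership through the regularity condition more cleverly.

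So the cleanest line: show $(A-A) + \sigma \subset A -_{\Gamma} A$ by proving that for every $d \in A - A$ there are at least $K$ representations $d + \sigma = \alpha - \beta$ with $\alpha, \beta \in A$, then invoke condition (2) of $(K,s)$-regularity. The count of representations of $d + \sigma$ is bounded below by a correlation/inclusion-exclusion estimate whose two inputs are $r_{A-A}(d)$ and $r_{A-A}(\sigma)$; the first is $\ge 2n - |A-_{\Gamma}A| - 2s$ by the Step~2 computation (no $\theta$ loss here since $B = -A$ means $A - A$ and $B - B$ coincide, collapsing the two separate arguments into one and eliminating the factor $\theta$), and the second is $\ge n/2$ from Step~1. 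This yields $(A - A) + \sigma \subset A -_{\Gamma} A$, hence $|A - A| \le |A -_{\Gamma} A| \le 2n - K - 2s$. Finally, since $A -_{\Gamma} A \ne A - A$, pick $a, a' \in A$ with $a - a' \notin A -_{\Gamma} A$; then $r_{A-A}(a - a') < K$, i.e. $|(-a' + A) \cap (-a + A)| < K$, so $|A - A| = |(-a' + A) \cup (-a + A)| > 2n - K \ge |A -_{\Gamma} A| + 2s \ge |A - A|$, a contradiction.

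I expect the main obstacle to be making the representation-counting in the middle step honestly yield the threshold $K$ rather than something weaker: the naive inclusion-exclusion $r(d) + r(\sigma) - n$ is too lossy, and the correct argument (as in \cite{Lev-2}) likely counts, for a fixed decomposition $\sigma = a_0 - a_1$, the intersections $(-a_1 + A) \cap (\text{translate of } A \text{ by an element realizing } d)$ and uses that $\sigma$ has \emph{many} decompositions to boost the count — so the bookkeeping of which translate to use, and verifying the arithmetic $\tfrac n2 + (2n - |A-_{\Gamma}A| - 2s) - n \ge K$ under the standing assumption $|A -_{\Gamma} A| \le 2n - K - 2s$ (which gives exactly $\ge \tfrac n2 + K - n \ge \ldots$, requiring $n \ge $ something or a sharper count), is where the real care is needed. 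This is precisely the place where the symmetric structure is indispensable and the general Proposition~\ref{kneser theta} argument cannot be improved.
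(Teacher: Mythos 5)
Your proposal has a genuine gap that you yourself identify but do not close, and the paper's actual proof takes a completely different and much simpler route.

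The gap: your plan is to first establish $(A-A)+\sigma \subset A-_{\Gamma}A$ by a representation count mimicking Step~3 of Proposition~\ref{kneser theta}, and then derive the contradiction. But as you note, the count comes out to $r_{A-A}(d) + r_{A-A}(\sigma) - n \ge K + \tfrac{n}{2} - n = K - \tfrac{n}{2}$, which falls short of the threshold $K$ needed to invoke $(K,s)$-regularity. Your Step~1 bound $r_{A-A}(\sigma) \ge n/2$ is simply too weak in this regime (in Proposition~\ref{kneser theta} the analogous bound is $(\theta-1)n \approx 0.618n$ against a weaker Step~2 bound, and the numbers are tuned so the sum clears $n$; with the more ambitious target $2n$ here, the arithmetic breaks). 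You speculate about ``using many decompositions of $\sigma$ to boost the count'' but never produce such an argument, so the middle step is unresolved. Without it, your final chain $|A-A| > 2n-K \ge |A-_{\Gamma}A|+2s \ge |A-A|$ is not justified, since the last inequality needs $|A-A| \le |A-_{\Gamma}A|$, which is exactly what the missing containment would give.

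The paper's proof sidesteps the entire pigeonhole-and-$\sigma$ machinery. It picks the single pair $(a,a')$ witnessing $a-a' \notin A-_{\Gamma}A$, so $r_{A-A}(a-a') < K$, and then replaces the full translates $-a+A$, $-a'+A$ (which you use, and which land only in $A-A$) with the $\Gamma$-restricted neighborhoods $N(a) = \{ -a'' \in -A : (a,a'') \in \Gamma \}$ and $N(a')$. The crucial gain is that $a+N(a)$ and $a'+N(a')$ are subsets of $A-_{\Gamma}A$ \emph{by construction}, so no containment lemma is needed; one only pays $|N(a)| \ge n-s$ instead of $n$. Since the intersection $(a+N(a)) \cap (a'+N(a'))$ injects into representations of $a-a'$, it has size $< K$, whence $|A-_{\Gamma}A| \ge |(a+N(a)) \cup (a'+N(a'))| > 2(n-s) - K$. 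Your final contradiction step is the unrestricted shadow of this argument; the idea you are missing is to restrict to $\Gamma$-neighborhoods so that the union sits inside $A-_{\Gamma}A$ automatically, which eliminates the need for Steps~1--3 entirely.
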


\begin{proof}
Since $A-_{\Gamma}A \neq A-A$, there exists $(a, -a') \in A \times -A$ such that $a-a'\notin A-_{\Gamma} A$.  This implies that $r_{A-A}(a-a') < K$ by the definition of $(K,s)$-regular. Let $N(a)$ be the set of neighbours of $a$ in $\Gamma$:
\[ N(a) = \{n_a \in -A \colon (a, n_a) \in \Gamma\}, \]
and define $N(a')$ similarly. Since
\[|(a + N(a)) \cap (a' + N(a')| \leq r_{A-A}(a-a') < K, \]
it follows that
\[ |(a + N(a)) \cup (a' + N(a'))| > |N(a)| + |N(a')| - K \ge 2n - 2s -K.             \]
The proof is completed by noting that $(a + N(a)) \cup (a'- N(a')) \subset A-_{\Gamma}A$.
\end{proof}

\section{Proof of Propositions~\ref{main prop A+B} and~\ref{main prop A-A}}\label{sec:proofs}

In this section we prove Propositions~\ref{main prop A+B} and~\ref{main prop A-A}. The proofs roughly follow Lev and Smeliansky's proof~\cite{LS} of Freiman's $3k-4$ theorem by reducing modulo $\ell$ and applying Kenser's theorem in $\Z/\ell\Z$, with a significant amount of work devoted to analyzing the stabilizer $H$ from Kneser's theorem.

\begin{proof}[Proof of Proposition~\ref{main prop A+B}]

Let $\widetilde{A}, \wt{B} \subset \Z/\ell\Z$ be the images of $A, B$ modulo $\ell$, respectively, so that $|\widetilde{A}| = n-1$ and $|\wt{B}| \geq n-1$. Let $\widetilde{\Gamma} \subset \widetilde{A} \times \widetilde{B}$ be the image of $\Gamma$ modulo $\ell$. The $(K,s)$-regularity of $A+_{\Gamma}B$ implies that $\wt{A}+_{\wt{\Gamma}}\wt{B}$ is $(2K,s)$-regular:
\begin{enumerate}
\item for any $\widetilde{a} \in \widetilde{A}$ there are at most $s$ values of $\widetilde{b} \in \widetilde{B}$ such that $(\widetilde{a},\wt{b}) \notin \wt{\Gamma}$, and similarly for any $\widetilde{b} \in \widetilde{B}$ there are at most $s$ values of $\widetilde{a} \in \widetilde{A}$ such that $(\widetilde{a},\wt{b}) \notin \wt{\Gamma}$; 
\item if $r_{\wt{A}+\wt{B}}(\wt{x}) \geq 2K$, then $\wt{x} \in \wt{A}+_{\wt{\Gamma}}\wt{B}$.
\end{enumerate}
Indeed, (1) holds because each bad pair $(a,b)\notin \Gamma$ reduces to at most one bad pair $(\wt{a},\wt{b}) \notin \wt{\Gamma}$. To show (2), take $\wt{x} \notin \wt{A}+_{\wt{\Gamma}}\wt{B}$, so that any pre-image of $\wt{x}$ is not in $A+_{\Gamma} B$. If $\wt{x} = 0$, then 
\[ r_{\wt{A}+_{\wt{\Gamma}}\wt{B}}(0)\le r_{A +_\Gamma B}(0) + r_{A +_\Gamma B}(\ell) + r_{A +_\Gamma B}(2 \ell) \le 1 + (K-1) + 1 < 2K, \] 
since $K\ge2$. If $\wt{x} \neq 0$, then it has two possible pre-images $x,x+\ell$ and thus
\[ r_{\wt{A}+_{\wt{\Gamma}}\wt{B}}(\wt{x}) \le r_{A +_\Gamma B}(x) + r_{A +_\Gamma B}(x + \ell) < K +K =2K. \]
This proves the $(2K,s)$-regularity of $\wt{A}+_{\wt{\Gamma}}\wt{B}$.

Note that if $(0,b)$ and $(\ell,b)$ both lie in $\Gamma$ for some $b \in B$, then both $0+b$ and $\ell+ b$ lie in $A+_{\Gamma}B$, but they reduce to the same element modulo $\ell$ and are thus counted only once in $\widetilde{A}+_{\widetilde{\Gamma}} \widetilde{B}$. Since there are at least $n-2s$ values of such $b \in B$, we conclude that
\begin{equation}\label{eq:A-to-Atilde} 
|A +_{\Gamma} B|  \ge  | \widetilde{A}+_{\widetilde{\Gamma}} \widetilde{B} | + n - 2s. 
\end{equation}
If $\ell < 2n -2K-2$, then $\ell + 2K < 2n - 2 \le |\wt{A}| + |\wt{B}|$. It follows that 
\[ |\wt{A} \cap (x-\wt{B})| \geq |\wt{A}| + |\wt{B}| - \ell > 2K \]
for any $x \in \Z/\ell\Z$, and hence $r_{\wt{A}+\wt{B}}(x) > 2K $ for any $x$. Since $\wt{A}+_{\wt{\Gamma}}\wt{B}$ is $(2K,s)$-regular, we have $\wt{A}+_{\wt{\Gamma}}\wt{B} = \Z/ \ell \Z$, and by~\eqref{eq:A-to-Atilde} we have $|A +_{\Gamma} B|\ge \ell +n -2s$, as desired.

If $\ell \ge 2n -2K -2$ and $\widetilde{A}+_{\widetilde{\Gamma}} \widetilde{B} \neq  \widetilde{A}+ \widetilde{B}$,  then we may apply Proposition~\ref{kneser theta} to get
\[ |\wt{A}+_{\wt{\Gamma}} \wt{B} | > \theta(n-1) - 2K - 2s. \]
Thus by~\eqref{eq:A-to-Atilde} we get the desired bound 
\[
|A +_{\Gamma} B| \ge (\theta + 1)n - 2K -4s -\theta.       
\]

It remains to consider the case when $\ell \ge 2n -2K-2$ and $\widetilde{A}+_{\widetilde{\Gamma}} \widetilde{B} =  \widetilde{A}+ \widetilde{B}$.
We may assume that
\begin{equation}\label{eq:assumption} 
 |\widetilde{A} + \widetilde{B}| = |\widetilde{A} +_{\widetilde{\Gamma}} \widetilde{B}|\le 2n -2s-4,
\end{equation}
since otherwise the conclusion follows from~\eqref{eq:A-to-Atilde}. Let $H = H (\widetilde{A} + \widetilde{B})$ be the period of $\widetilde{A} + \widetilde{B}$, so that $\wt{A} + \wt{B} + H = \wt{A} + \wt{B}$. By Kneser's theorem applied to $\widetilde{A} + \widetilde{B}$, we have
\begin{equation}\label{eq:Kneser} 
|\widetilde{A} + \widetilde{B}| \geq |\wt{A} + H| + |\wt{B} + H| - |H|. 
\end{equation}

To count the number of elements $x \in A+_{\Gamma}B$, we divide into three cases according to whether $\wt{x}$, the projection of $x$ modulo $\ell$, lies in $\wt{B}$, $(\wt{B}+H)\setminus\wt{B}$, or $(\wt{A}+\wt{B})\setminus (\wt{B}+H)$. Note that $\wt{B} + H \subset \wt{A} + \wt{B}$ since $\wt{A} + \wt{B} + H = \wt{A} + \wt{B}$ and $0 \in \wt{A}$. Note also that if $\wt{A} \subset H$ then one only needs to consider the first two cases as the third set is empty.

\subsection*{Case 1} 

First we claim that
\[ \# \{ x\in A + _\Gamma B: \widetilde{x} \in \wt{B}\} \ge 2n-2s-1. \]
Write $B= \{b_1, b_2, \cdots, b_n \}$ where $0=b_1 < b_2 < \cdots < b_n \le \ell$. Since $0,\ell \in A$, the following $2n-1$ sums all lie in $A+B$:
\[   0 + b_1, 0 + b_2,\cdots,  0 + b_n, \ell + b_2, \ell + b_3,\cdots, \ell + b_n.  \]
Moreover, at most $2s$ of them do not belong to $A+_\Gamma B$, and the claim follows.

\subsection*{Case 2}
We claim that
\[ \# \{ x\in A + _\Gamma B:\wt{x} \in (\wt{B}+H)\setminus\wt{B}\} \ge |(\wt{B} + H)\backslash \wt{B}| = |\wt{B} + H| - |\wt{B}|. \] 
Indeed, since $(\wt{B} + H)\backslash \wt{B} \subset \wt{A} + \wt{B} = \wt{A} +_{\widetilde{\Gamma}} \widetilde{B}$, every $\wt{x} \in (\wt{B} + H)\backslash \wt{B}$ has a pre-image in $A+_\Gamma B$.

We may now conclude the proof when $\wt{A} \subset H$. If $H = \Z/\ell\Z$, then from~\eqref{eq:Kneser} it follows that $|\wt{A}+_{\wt{\Gamma}}\wt{B}| = |\wt{A}+\wt{B}| \geq \ell$. Hence by~\eqref{eq:A-to-Atilde}, we have
\[ |A+_{\Gamma}B| \geq \ell + n - 2s \geq 3n-2K-2s-2 > (\theta+1)n - 4s - 2K - \theta, \]
as desired. If $H \subsetneq \Z/\ell\Z$, then $\wt{B}+H$ must contain at least two cosets of $H$, since otherwise $\wt{B}$ is contained in a single coset of $H$, contradicting the assumptions that $0 \in \wt{B}$ and $\text{gcd}(A \cup B)=1$. Hence $|\wt{B}+H| \geq 2|H| \geq 2|\wt{A}| = 2n-2$. Combining the bounds from Cases 1 and 2, we have
\[
\begin{split} 
|A+_{\Gamma}B| &\geq 2n-2s-1 + |\wt{B}+H|-|\wt{B}| \geq 2n-2s-1+ (2n-2) - n \\
&> (\theta+1)n-4s-2K-\theta,
\end{split}
\]
as desired. From now on assume that $\wt{A} \subsetneq H$, and we will need to consider Case 3 as well.

\subsection*{Case 3}

Finally we prove that
\[ \# \{ x\in A + _\Gamma B: \wt{x} \in (\wt{A}+\wt{B})\setminus (\wt{B}+H)\} \ge 2|\wt{B}| - |\wt{B} + H| -2s -1. \]
Since $\wt{A} + \wt{B} + H = \wt{A} + \wt{B}$, the set $(\wt{A}+\wt{B})\setminus (\wt{B}+H)$ is a disjoint union of $N$ cosets of $H$, where
\[ N = \frac{|\wt{A}+\wt{B}| - |\wt{B}+H|}{|H|}. \]
Note that $N \geq 1$, since otherwise $|\wt{A} + \wt{B} | = |\wt{B}+H|$, and by~\eqref{eq:Kneser} we get $|\wt{A}+H| \leq |H|$, contradicting the assumption that $\wt{A} \subsetneq H$. Fix a coset $\wt{a}+\wt{b} + H$ in $(\wt{A}+\wt{B})\setminus (\wt{B}+H)$, and  let us count the quantity 
\[M:=\#\{ x \in A+_{\Gamma}B:   \wt{x} \in \wt{a}+\wt{b}+H \}.  \]
Define $A'$ to be the set of those elements in $A$ whose projections modulo $\ell $ lie in the coset $\wt{a}+H$, and similarly define $B'$ to be the set of those elements in $B$ whose projections modulo $\ell$ lie in $\wt{b}+H$. Since any sum in $A'+_{\Gamma}B'$ projects to an element in the desired coset $\wt{a}+\wt{b}+H$, we have
\begin{equation}\label{eq:Mbound} 
M \geq |A'+_{\Gamma}B'| \geq |A'| + |B'| - 2s -1,
\end{equation}
where the second inequality follows by writing 
\[ A' = \{x_1 < x_2 < \cdots < x_{|A'|}\}, \ \ B' = \{y_1 < y_2 < \cdots < y_{|B'|}\}, \]
and noting that among the $|A'|+|B'|-1$ distinct sums
\[ x_1+y_1, x_1+y_2, \cdots, x_1+y_{|B'|},x_2+y_{|B'|}, \cdots, x_{|A'|}+y_{|B'|}, \]
at most $2s$ of them do not lie in $A'+_{\Gamma}B'$.

Let $\wt{A}'$ be the image of $A'$ after modulo $\ell$; i.e. $\wt{A}' = \wt{A} \cap (\wt{a}+H)$. Since $(\wt{a}+H) \setminus \wt{A} = (\wt{a}+H) \setminus \wt{A}'$ is contained in $(\wt{A}+H) \setminus \wt{A}$, we have
\[ |\wt{A}+H| - |\wt{A}| \geq |\wt{a}+H| - |\wt{A}'| \geq |H| - |A'|. \]
It follows that
\[|A'|\ge  |H| + |\wt{A}| - |\wt{A}  + H|, \]
and similarly,
\[|B'|\ge |H| + |\wt{B}| - |\wt{B} + H|.\]
Combining the two bounds above with~\eqref{eq:Mbound} and~\eqref{eq:Kneser}, we obtain
\[
\begin{split} 
M & \ge 2|H| + |\wt{A}| + |\wt{B}| - |\wt{A} +H | - |\wt{B} + H | -2s-1 \\
&\ge |H| + |\wt{A}| + |\wt{B}| - |\wt{A} +\wt{B}| -2s -1.
\end{split}
\]
Call the lower bound above $M_0$ so that $M \geq M_0$. In particular, we have $M_0 \geq |H|+1$ by~\eqref{eq:assumption}. We conclude that the total number of $x \in A+_{\Gamma}B$ counted in this case is at least $M_0N$, and
\[ 
\begin{split}
M_0N &= N|H| + N (M_0 - |H|) \geq N|H| + M_0 - |H| \\ 
& = |\wt{A}+\wt{B}| - |\wt{B}+H| + (|H| + |\wt{A}| + |\wt{B}| - |\wt{A} +\wt{B}| -2s -1 ) -|H|\\
& = |\wt{A}| + |\wt{B}| - |\wt{B} + H|-2s -1.
\end{split}
\]
Combining Cases 1, 2, and 3 together we get the desired bound
\[ 
\begin{split}
|A+_\Gamma B| & \ge  (2n-2s-1 )+   ( |\wt{B} + H| - |\wt{A}|) + (|\wt{A}| + |\wt{B}| - |\wt{B} + H|-2s -1 ) \\
& \ge 3n - 4s -3.
\end{split}
\]
This completes the proof.
\end{proof}

\begin{proof}[Proof of Proposition~\ref{main prop A-A}]

One can follow  the proof of Proposition~\ref{main prop A+B} above, taking $B = \ell - A$ and using Proposition~\ref{kneser 3} instead of Proposition~\ref{kneser theta}.
\end{proof}

\section{Robust results with other assumptions}\label{sec:other-robust}

Our main theorem deals with the restricted sumset $A+_{\Gamma}B$, where $\Gamma$ consists of almost all of the pairs from $A \times B$. One may consider other robust versions of the sumset $A+B$. In this section we deduce two such variants, and compare them with existing results in the literature.

\begin{corollary}\label{Pollard}
Let $A$ and $B$ be two sets of integers both with size $n \geq 3$. Let $0 < \delta < \tfrac{\sqrt{5}-1}{2}$. Assume that 
\[ \sum_{x} \min(1_A*1_B(x), t) \le (2 + \delta)nt, \ \ 1 \le t \le 2^{-10} \left(\tfrac{\sqrt{5}-1}{2}-\delta\right)^2 n. \]
Then there are arithmetic progressions $P,Q$ with the same common difference and sizes at most $(1+\delta)n + 10t^{1/2}n^{1/2}$, such that $|A \cap P| \geq n - 2t^{1/2}n^{1/2}$ and $|B \cap Q| \geq n - 2t^{1/2}n^{1/2}$. 
\end{corollary}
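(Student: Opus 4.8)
The plan is to derive Corollary~\ref{Pollard} from Theorem~\ref{1+theta} by building a suitable subset $\Gamma \subset A \times B$ from the popular-sums hypothesis. The key observation is that the quantity $\sum_x \min(1_A*1_B(x),t)$ controls both the number of pairs $(a,b)$ that we must discard in order to get a well-behaved restricted sumset, and the size of the restricted sumset itself.

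\emph{Step 1: choosing the good graph $\Gamma$.} Let $S = \{x : 1_A * 1_B(x) \ge t\}$ be the set of ``popular'' sums. For each popular $x \in S$, discard all but $t$ of the representations $x = a + b$; let $\Gamma \subset A \times B$ be the set of pairs $(a,b)$ that survive (together with all pairs $(a,b)$ with $a+b \notin S$). Then $A +_\Gamma B \supseteq A+B$, and in fact $A +_\Gamma B = A+B$ if we keep at least one representation of each sum; more importantly, $|\Gamma| \ge n^2 - \sum_{x \in S}(1_A*1_B(x) - t)$. Since for every $x$ we have $1_A*1_B(x) - t \le 1_A*1_B(x) - \min(1_A*1_B(x),t)$ when $x \in S$ and the difference is $0$ otherwise, summing gives $|\Gamma| \ge n^2 - \big(n^2 - \sum_x \min(1_A*1_B(x),t)\big) = \sum_x \min(1_A*1_B(x),t)$. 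Hmm — this bound goes the wrong way; instead I will bound the \emph{number of deleted pairs}: $n^2 - |\Gamma| = \sum_{x\in S}(1_A*1_B(x)-t)$. To control this, note $\sum_x \min(1_A*1_B(x), t) = \sum_x 1_A*1_B(x) - \sum_{x \in S}(1_A*1_B(x) - t) = n^2 - (n^2 - |\Gamma|)$, so $n^2 - |\Gamma| = n^2 - \sum_x\min(1_A*1_B(x),t) \le n^2 - 2nt$ by hypothesis. That is far too weak. The correct route is different: one does \emph{not} delete down to $t$ representations but instead observes that $|S| \le \sum_x \min(1_A*1_B(x),t)/t \le (2+\delta)n$, i.e. $|A+_{\Gamma}B| \le |S| + |\text{(unpopular sums)}|$, and unpopular sums number at most $n^2/\text{(something)}$ — again not obviously enough. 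So the real mechanism must be: take $\Gamma = \{(a,b) : 1_A*1_B(a+b) < t\} \cup (\text{one chosen representative for each } x \in S)$ is wrong too. Let me reconsider.

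\emph{Corrected Step 1.} Set $K = t$ and let $\Gamma$ consist of all $(a,b) \in A \times B$ with $1_A*1_B(a+b) \le t$, together with, for each $x$ with $1_A*1_B(x) > t$, an arbitrary choice of $t$ pairs summing to $x$. Then $A +_\Gamma B = A + B$ and $\Gamma$ is $(t, s)$-regular for $s$ equal to the maximum over $a \in A$ of $\#\{b : (a,b) \notin \Gamma\}$; crucially $\#\{(a,b)\notin\Gamma\} = \sum_{1_A*1_B(x)>t}(1_A*1_B(x)-t) \le \sum_x \big(1_A*1_B(x) - \min(1_A*1_B(x),t)\big) = n^2 - \sum_x\min(1_A*1_B(x),t)$, and by hypothesis this is NOT small. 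The genuinely working idea, then, is to avoid Theorem~\ref{1+theta} as a black box and instead mimic its proof: first pass to $A', B'$ with $|A'|,|B'| \ge n - 2t^{1/2}n^{1/2}$ such that each $a' \in A'$ has at most $t^{1/2}n^{1/2}$ bad neighbours, which is possible because $n^2 - \sum_x \min(1_A*1_B(x),t)$... and here is where I must be honest that the clean statement to invoke is Theorem~\ref{1+theta} with $\ee = (2t^{1/2}n^{1/2}/n)^2 = 4t/n$, for which the hypotheses $n \ge 2\ee^{-1/2} = n^{1/2}t^{-1/2}$ and the sumset bound $|A+B| < (1+\theta - 11\ee^{1/2})n = (1+\theta)n - 22(tn)^{1/2}$ must be checked. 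Thus:

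\emph{Step 2.} From $\sum_x \min(1_A*1_B(x),t) \le (2+\delta)nt$ deduce, via the inequality $|A+B| \cdot t \ge \sum_{x \in A+B}\min(1_A*1_B(x),t) = \sum_x \min(1_A*1_B(x),t) - 0$... no: $\sum_x \min(1_A*1_B(x),t) \ge t \cdot \#\{x : 1_A*1_B(x) \ge t\} + \#\{x : 0 < 1_A*1_B(x) < t\} \cdot 1$, which does not directly bound $|A+B|$ either. The honest extraction is: among $x \in A+B$, either $1_A*1_B(x) \ge 1$, and $\min(1_A*1_B(x),t) \ge 1$, but we can do better on popular ones. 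Write $P_1 = \{x : 1_A*1_B(x) \ge t\}$ and $P_2 = (A+B)\setminus P_1$. Then $(2+\delta)nt \ge \sum_x\min \ge t|P_1| + |P_2|$, so $|P_1| \le (2+\delta)n$ and, since also $n^2 = \sum_x 1_A*1_B(x) \ge t|P_1|$ trivially, the useful bound is $|A+B| = |P_1| + |P_2|$ with $|P_2| \le (2+\delta)nt$. That last is weak, but $|P_2|$ also satisfies $|P_2| \le |A+B|$ and each point of $P_2$ has $<t$ representatives so $|P_2| > (n^2 - t|P_1|)/t \ge$ nothing useful.

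\emph{Step 3 — where the real work lies.} Because the naive counting above does not close, the main obstacle — and the crux of the proof — is to show that the hypothesis forces $|A+B| < (1+\theta)n - 22(tn)^{1/2}$ (so that Theorem~\ref{1+theta} applies at all) \emph{unless} the desired conclusion already holds for a trivial reason. I expect this to go by the following dichotomy: if $|A+B| \ge (1+\theta)n - 22(tn)^{1/2}$, apply a robust Kneser/Pollard-type inequality (Proposition~\ref{kneser theta} in spirit, or a direct Pollard's theorem argument) to the set $\Gamma$ of pairs with $1_A*1_B(a+b) \le t$ to get a contradiction with $\sum_x \min(1_A*1_B(x),t) \le (2+\delta)nt$; otherwise Theorem~\ref{1+theta} with $\ee = 4t/n$ directly yields progressions $P, Q$ of common difference and length $\le |A+B| - (1-5\ee^{1/2})n \le (2+\delta)nt/\!\ldots$ — wait, one still needs $|A+B| \le (2+\delta)n + \text{(error)}$, which should follow because $\sum_x \min(1_A*1_B(x),t) \ge \sum_{x \in A+B} \min(1_A*1_B(x),t)$ and a majority of sums in a set satisfying the $3k-4$-type structure are popular. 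Concretely: once the structure theorem gives $A \subset P$, $B \subset Q$ essentially, most sums $x = a+b$ with $a,b$ in the bulk lie in a short interval and have $\gtrsim n$ representatives, hence are $\ge t$-popular (as $t \le 2^{-10}(\ldots)n$), contributing $\ge t$ each; counting these forces $|A+B| \cdot t \lesssim (2+\delta)nt$, i.e. $|A+B| \le (2+\delta)n + 10(tn)^{1/2}$, and therefore length $\le (1+\delta)n + 10(tn)^{1/2}$ after subtracting $(1-5\ee^{1/2})n = n - 10(tn)^{1/2}$. Finally, translating the sizes: $\ee^{1/2} = 2(t/n)^{1/2}$, so $\ee^{1/2} n = 2t^{1/2}n^{1/2}$, giving $|A \cap P|, |B \cap Q| \ge (1-\ee^{1/2})n = n - 2t^{1/2}n^{1/2}$ and length $\le |A+_\Gamma B| - (1-5\ee^{1/2})n$, which combined with $|A+_\Gamma B| = |A+B| \le (2+\delta)n + \text{error}$ yields the claimed $(1+\delta)n + 10t^{1/2}n^{1/2}$. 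The genuinely delicate point, and the one I would spend the most care on, is making the ``most sums are popular'' step quantitative enough that the constants $2^{-10}$ and $10$ come out as stated; this is where the condition $t \le 2^{-10}(\tfrac{\sqrt5-1}{2}-\delta)^2 n$ is consumed.
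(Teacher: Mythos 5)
Your proposal circles the right ingredients but never lands on the one clean move, which is to apply Theorem~\ref{1+theta} to the restricted sumset $A+_\Gamma B$ where $\Gamma$ consists of the \emph{popular} pairs --- not to $A+B$, and not to a thinned-out $\Gamma$ with only $t$ representatives per popular sum. Set $\Gamma = \{(a,b) \in A\times B : r_{A+B}(a+b) \geq t\}$. Then $A+_\Gamma B$ is exactly the set of $t$-popular sums, and one has the identity
\[
\sum_x \min\bigl(1_A*1_B(x),t\bigr) \;=\; t\,|A+_\Gamma B| \;+\; |(A\times B)\setminus\Gamma|,
\]
because each popular $x$ contributes $t$, while the remaining contributions $\sum_{x : r_{A+B}(x)<t} r_{A+B}(x)$ count precisely the pairs $(a,b)$ with $r_{A+B}(a+b)<t$, i.e.\ $|(A\times B)\setminus\Gamma|$. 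Both terms on the right are nonnegative, so the hypothesis gives simultaneously $|A+_\Gamma B| \leq (2+\delta)n$ and $|(A\times B)\setminus\Gamma| \leq (2+\delta)nt \leq 3nt$. Now Theorem~\ref{1+theta} applies with $\ee = 3t/n$: the lower bound $n\ge 3$ and $t\ge 1$ give $n \ge 2\ee^{-1/2}$, and the upper bound on $t$ is chosen exactly so that $(2+\delta) < 1+\theta-11\ee^{1/2}$. The stated bounds then follow from $5\sqrt{3tn} \le 10\sqrt{tn}$ and $\sqrt{3tn}\le 2\sqrt{tn}$.

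Your Steps~1--2 kept going ``the wrong way'' because you were trying either to make $A+_\Gamma B = A+B$ or to bound $|A+B|$ outright; neither is available from the hypothesis (the unpopular part of $A+B$ can be enormous while contributing little to $\sum_x \min(1_A*1_B(x),t)$). That mismatch is also why your Step~3 invents a dichotomy and a ``most sums are popular'' argument that are not needed: the popular sums \emph{are} $A+_\Gamma B$, and their count is already controlled by the displayed identity, so there is no further estimate to extract and no need to invoke Proposition~\ref{kneser theta} or Pollard's theorem independently of Theorem~\ref{1+theta}.
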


The assumption here is motivated by Pollard's theorem which states that
\[ \sum_x \min(1_A*1_B(x), t) \geq t(2n-t) \]
for any $0 \leq t \leq n$. In the case when $A=B$, this should be compared with~\cite[Theorem 1.1]{Mazur}, where the assumptions on $\delta$ and $t$ are:
\[ 0 < \delta < \tfrac{1}{4}, \ \ t \leq \tfrac{1}{5}\left(\tfrac{1}{4}-\delta\right)n. \] 
Thus Corollary~\ref{Pollard} extends the range of $\delta$, at the cost of requiring $t$ to be a much smaller constant times $n$.

\begin{proof}
Let $\Gamma$ be the set of pairs $(a,b) \in A\times B$ with $r_{A+B}(a+b) \geq t$. Then
\[ \sum_x \min(1_A*1_B(x), t) = \sum_{x\in A+_{\Gamma}B}t + \sum_{x\notin A+_{\Gamma}B} 1_A*1_B(x) = t|A+_{\Gamma}B| + |(A\times B)\setminus\Gamma|.  \] 
By assumption the quantity above is bounded by $(2+\delta)nt$. Thus it follows that
\[ |A+_{\Gamma}B| \leq (2+\delta)n \]
and
\[ |(A\times B)\setminus \Gamma| \leq (2+\delta)nt \leq 3nt. \]
We will apply Theorem~\ref{1+theta} with $\ee = 3t/n$. The upper bound on $t$ implies that
\[ 2+\delta \leq 1+\theta- 11 \ee^{1/2}, \]
where $\theta = \tfrac{\sqrt{5}+1}{2}$, so that the assumption of Theorem~\ref{1+theta} is satisfied. Thus there are arithmetic progressions $P,Q$ with the same common difference and sizes at most
\[ |A+_{\Gamma}B| - (1-5(3t/n)^{1/2})n \leq (1+\delta)n + 10t^{1/2}n^{1/2},
  \]
such that
\[ |A \cap P| \geq (1-(3t/n)^{1/2})n \geq n - 2t^{1/2}n^{1/2}, \ \ |B \cap Q| \geq n - 2t^{1/2}n^{1/2}. \]
This completes the proof.
\end{proof}

The second corollary deals with popular sums; see~\cite[Lemma 3.4]{Matomaki} and~\cite[Theorem 2.4]{Shao}.

\begin{corollary}\label{popular-sum}
Let $A$ and $B$ be two sets of integers both with size $n$. Let $\eta$ and $\delta$ be positive numbers such that $11 \eta^{1/2} + \delta < \frac{\sqrt{5}-1}{2}$ and $n > 2\eta^{-1/2}$. Assume that
\[ \#\left\{x \in A+B \colon r_{A+B}(x) \geq \frac{\eta n^2}{|A+B|} \right\} \leq (2 + \delta )n. \]
Then there exist arithmetic progressions $P,Q$ with the same common difference and sizes at most $(1 + \delta + 5 \eta^{1/2})n$, such that $|A \cap P| \geq (1 - \eta^{1/2}) n$ and $|B \cap Q| \geq (1 - \eta^{1/2})n$. 
\end{corollary}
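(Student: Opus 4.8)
The plan is to reduce Corollary~\ref{popular-sum} to Theorem~\ref{1+theta} by exhibiting an appropriate restriction set $\Gamma$. First I would set $K = \eta n^2/|A+B|$ and let $\Gamma$ consist of those pairs $(a,b) \in A \times B$ for which $r_{A+B}(a+b) \geq K$; the hypothesis says precisely that $|A +_\Gamma B| = \#\{x \in A+B : r_{A+B}(x) \geq K\} \leq (2+\delta)n$. The content of the argument is to check that this $\Gamma$ is large, i.e. that $|\Gamma| \geq (1-\ee)n^2$ for a suitable $\ee$, and then to apply Theorem~\ref{1+theta} with $\ee = \eta$ (or a small constant multiple thereof).

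To bound $|(A \times B) \setminus \Gamma|$ I would argue as follows: a pair $(a,b)$ is excluded from $\Gamma$ exactly when $r_{A+B}(a+b) < K$, so
\[
|(A \times B) \setminus \Gamma| = \sum_{\substack{x \in A+B \\ r_{A+B}(x) < K}} r_{A+B}(x) \leq K \cdot |A+B|,
\]
and since $K = \eta n^2/|A+B|$ this is at most $\eta n^2$. Hence $|\Gamma| \geq (1-\eta)n^2$, so we may take $\ee = \eta$ in Theorem~\ref{1+theta}. The hypotheses $n > 2\eta^{-1/2}$ and $11\eta^{1/2} + \delta < \tfrac{\sqrt 5 - 1}{2}$ are arranged exactly so that $n \geq \max\{3, 2\ee^{-1/2}\}$ and $(2+\delta)n < (1 + \theta - 11\eta^{1/2})n$, the latter because $2 + \delta < 1 + \theta - 11\eta^{1/2}$ is equivalent to $\delta + 11\eta^{1/2} < \theta - 1 = \tfrac{\sqrt 5 - 1}{2}$. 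Thus the upper bound hypothesis $|A +_\Gamma B| < (1 + \theta - 11\eta^{1/2})n$ of Theorem~\ref{1+theta} holds.

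Applying Theorem~\ref{1+theta} then yields arithmetic progressions $P, Q$ with common difference and sizes at most $|A +_\Gamma B| - (1 - 5\eta^{1/2})n \leq (2+\delta)n - (1 - 5\eta^{1/2})n = (1 + \delta + 5\eta^{1/2})n$, with $|A \cap P| \geq (1 - \eta^{1/2})n$ and $|B \cap Q| \geq (1 - \eta^{1/2})n$, which is exactly the claimed conclusion. I do not expect any serious obstacle here: the whole corollary is essentially a repackaging of Theorem~\ref{1+theta}, and the only mild care needed is in the bookkeeping that translates "popular sums relative to the threshold $\eta n^2/|A+B|$" into the $(K,s)$-regularity language via the double-counting inequality above, together with checking the numerology linking $\delta$, $\eta$, and $\theta$. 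One small point to be careful about is that $|A+B|$ appears in the definition of the threshold $K$, so $K$ is not known in advance; but this causes no trouble because the bound $|(A\times B)\setminus\Gamma| \leq K|A+B| = \eta n^2$ is independent of the actual value of $|A+B|$.
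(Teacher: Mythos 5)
Your proof is correct and essentially identical to the paper's: both choose $\Gamma=\{(a,b):r_{A+B}(a+b)\geq K\}$ with $K=\eta n^2/|A+B|$, bound $|(A\times B)\setminus\Gamma|\leq K|A+B|=\eta n^2$ by the same double-counting, and then apply Theorem~\ref{1+theta} with $\ee=\eta$, using the hypothesis $11\eta^{1/2}+\delta<\theta-1$ to verify the size condition on $A+_{\Gamma}B$. Nothing further is needed.
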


Compared with~\cite[Lemma 3.4]{Matomaki} which was deduced from~\cite[Theorem 1]{Lev-theta}, Corollary~\ref{popular-sum} is applicable to two distinct sets $A,B$ and can be deduced from our Proposition~\ref{main prop A+B}.

\begin{proof}
In this proof, we write $r(x)$ for $r_{A+B}(x)$, and let $K = \eta n^2/|A+B|$. Let $\Gamma \subset A \times B$ be the set of $(a,b) \in A \times B$ with $r(a+b) \geq K$. From the inequality
\[ n^2 = \sum_{x} r(x) = \sum_{x\colon r(x) < K} r(x) + \sum_{x \colon r(x) \geq K} r(x) \leq K|A+B| + |\Gamma|, \]
it follows that
\[ |\Gamma| \geq n^2 - K|A+B| = (1-\eta)n^2. \]
We will apply Theorem~\ref{1+theta} with $\ee = \eta$. Since $2 + \delta < 1 + \theta - 11 \eta^{1/2}$ by assumption, we can apply Theorem~\ref{1+theta} to conclude that there are arithmetic progressions $P,Q$ with the same common difference and sizes at most
\[ (2+\delta)n - (1-5\eta^{1/2})n \leq (1+\delta+5\eta^{1/2})n, \]
such that
\[ |A \cap P| \geq (1-\eta^{1/2})n, \ \ |B \cap Q| \geq (1-\eta^{1/2})n. \]
This completes the proof.
\end{proof}

\section{Proof of Theorem \ref{carry}}\label{sec:carry}

In this section, we deduce Theorem~\ref{carry} from Theorem~\ref{1+theta}. The plan is to divide $A$ into its positive and negative parts, and apply Theorem~\ref{1+theta} to each of these two parts (with appropriately chosen $\Gamma$). After that, we need to ensure that the arithmetic progressions containing these two parts can be glued together so that they contain $0$ and are symmetric around $0$.

\begin{lemma}\label{counting}
If $A \subset \Z_{> 0}$ and $|A| = n$, then $C(A)< 1/2$.
\end{lemma}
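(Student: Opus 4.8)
The plan is to prove the strict inequality $C(A) < 1/2$ by a double-counting argument that exploits the fact that $A$ consists only of positive integers. Recall that $C(A)|A|^2$ counts the triples $(a,a') \in A \times A$ with $a + a' \in A$. The key observation is that if $a, a' \in A$ with $a \le a'$ and $a + a' \in A$, then necessarily $a' < a + a'$, so $a'$ is \emph{not} the largest element of $A$; more importantly, for a fixed value of $a$, the map $a' \mapsto a + a'$ is injective from the set $\{a' \in A : a + a' \in A\}$ into $A$, and its image avoids the smallest $|\{a'' \in A : a'' \le a\}|$ elements of $A$ (since $a + a' > a' \ge a$ forces $a+a'$ to exceed every element of $A$ that is at most $a$).

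The main step is to sum this over all $a \in A$. Writing $A = \{a_1 < a_2 < \cdots < a_n\}$, for $a = a_i$ the number of $a' \in A$ with $a_i + a' \in A$ is at most $n - i$ (the image lies among $\{a_{i+1}, \ldots, a_n\}$, which has $n - i$ elements — in fact one can do slightly better, but $n-i$ suffices). Therefore
\[
C(A) n^2 = \#\{(a,a') : a+a' \in A\} \le \sum_{i=1}^{n} (n - i) = \binom{n}{2} = \frac{n(n-1)}{2} < \frac{n^2}{2},
\]
which gives $C(A) < 1/2$ immediately. One should double-check the edge cases: when $n = 1$ the sum is empty and $C(A) = 0 < 1/2$ trivially, so the bound is uniform in $n$.

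I expect the only real subtlety — and it is minor — to be making the injectivity-and-image-avoidance claim airtight, i.e. verifying carefully that for fixed $a_i$ the values $a_i + a'$ as $a'$ ranges over $\{a' \in A : a_i + a' \in A\}$ are distinct elements of $A$ all strictly exceeding $a_i$, hence lying in $\{a_{i+1}, \dots, a_n\}$. This is where positivity of $A$ is used in an essential way (it guarantees $a_i + a' > a' \ge a_i$, indeed $a_i + a' \ge a' + 1 > a'$, and also $a_i + a' > a_i$). Everything else is a routine triangular-sum computation, and the strictness of the final inequality comes for free from $n(n-1)/2 < n^2/2$.
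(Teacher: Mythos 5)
Your proof is correct and is essentially the same argument as the paper's: both order $A = \{a_1 < \cdots < a_n\}$, observe that for fixed $a_i$ all $n$ sums $a_i + a'$ exceed $a_i$ and are distinct, so at most $n-i$ can lie in $A$, and then sum the triangular series $\sum_{i=1}^n (n-i) = n(n-1)/2 < n^2/2$. Your remarks about injectivity and image-avoidance make explicit exactly the step the paper states in one clause.
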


\begin{proof}
Write $A = \{a_1 < a_2 < a_3 < \cdots < a_n\}$. For each $i$, the sums $a_i+a_1, \cdots, a_i+a_n$ are all of size greater than $a_i$, and thus at most $n-i$ of them can lie in $A$. It follows that
\[ C(A) \leq \frac{1}{n^2} \sum_{i=1}^n (n-i) < \frac{1}{2}.  \]
\end{proof}

\begin{lemma}\label{lem:carry-pos}
Let $\ee \in (0,2^{-20})$. Let $A \subset \Z_{> 0}$ be a set of $n \geq 2\ee^{-1/2}$ positive integers. If $C(A) \geq (1-\ee)/2$, then there is an arithmetic progression $P$ containing $0$ with size at most $(1 + 45\ee^{1/2})n$ such that $|A \cap P| \geq (1- \ee^{1/2})n$.
\end{lemma}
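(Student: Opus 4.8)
The plan is to apply Theorem~\ref{1+theta} with $B = A$ and a cleverly chosen $\Gamma$ that records exactly which sums $a+a'$ land back in $A$. Concretely, set $\Gamma = \{(a,a') \in A \times A : a + a' \in A\}$, so that by hypothesis $|\Gamma| = C(A) n^2 \geq (1-\ee) n^2 / 2$. The subtlety is that $|\Gamma|$ is only about $n^2/2$, not $(1-\ee)n^2$, so Theorem~\ref{1+theta} does not apply directly to $\Gamma$; but the structure of $\Gamma$ is very rigid. Writing $A = \{a_1 < \cdots < a_n\}$, the argument in Lemma~\ref{counting} shows that for each $i$, at most $n-i$ of the sums $a_i + a_1, \dots, a_i + a_n$ lie in $A$, and the total is $\geq (1-\ee)n^2/2 = (1-\ee)\sum_i (n-i) + O(\ee n)$ roughly, so for all but an $O(\ee^{1/2} n)$-sized exceptional set of indices $i$, the count $\#\{j : a_i + a_j \in A\}$ is at least $(n-i) - \ee^{1/2} n$ (an averaging/Markov argument over the deficits). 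In particular $A +_\Gamma A \subseteq A$, hence $|A +_\Gamma A| \leq n$, which is the key input.

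Next I would pass to the "top half" of $A$ to manufacture a situation to which the complete-sumset philosophy applies. Let $m \approx n/2$ and consider $A^+ = \{a_{m}, a_{m+1}, \dots, a_n\}$, the largest $n-m+1 \approx n/2$ elements. For $i$ in the good range, $a_i + a_j \in A$ for at least $(n-i) - \ee^{1/2} n$ values of $j$, and since $a_i + a_j > a_i \geq a_m$, these sums all lie in $A^+$ (more precisely in the part of $A$ above $a_i$). This forces $A^+$ to have very small doubling in the restricted sense: one gets a subset $\Gamma^+ \subseteq A^+ \times A^+$ with $|\Gamma^+| \geq (1 - c\ee^{1/2})|A^+|^2$ (after discarding the exceptional indices and the bottom few $j$'s) and $|A^+ +_{\Gamma^+} A^+| \leq |A| \leq n \leq (2 + c\ee^{1/2})|A^+|$. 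Now apply Theorem~\ref{1+theta} to $A^+$ with $\ee$ replaced by $c\ee^{1/2}$: since $2 + c\ee^{1/2} < 1 + \theta$ for $\ee$ small, we obtain an arithmetic progression $P^+$ with common difference $d$ containing all but $O(\ee^{1/4}|A^+|) = O(\ee^{1/4} n)$ of $A^+$ and with $|P^+| \leq |A^+ +_{\Gamma^+} A^+| - (1 - O(\ee^{1/4}))|A^+| \leq n - (1 - O(\ee^{1/4}))n/2 \leq (1/2 + O(\ee^{1/4}))n$.

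It remains to (a) propagate the structure from $A^+$ down to all of $A$ and (b) insert $0$ into the progression. For (a): if $a \in A$ is arbitrary and $a$ is "good", then $a + a_j \in A \subseteq$ (structured set) for many $j$ with $a_j \in A^+ \cap P^+$; since $a + a_j$ lies in an interval of the progression $P^+$ for many values of $a_j$ in the progression $P^+$, a standard rigidity argument (e.g. via Freiman's $3k-4$ theorem applied to $\{a\} \cup (A^+ \cap P^+)$, or directly) forces $a$ itself to be congruent to a common residue mod $d$ and to lie in a controlled range, so $A$ is mostly contained in an arithmetic progression $P$ with the same common difference $d$ and length $(1 + O(\ee^{1/4}))n$. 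For (b): since $0 \cdot$ is not in play, note $a_1 + a_1 \in A$ for $a_1$ good would be needed — instead use that for the smallest good element $a_i$, the sum $a_i + a_i \in A$ places $2a_i$ in the progression, and iterating $a_i \mapsto a_i + a_i \mapsto \cdots$ (or just using $a_i + a_j$ with $a_j$ the minimal good element) pins the progression's offset so that extending it to include $0$ costs only $O(\ee^{1/2} n)$ extra terms, since the smallest element of $A$ is itself $O(\ee^{1/2} n \cdot d)$ close to a multiple of $d$ from $0$ — here one uses that the density of $A$ in the structured progression is $\geq 1 - O(\ee^{1/4})$, so $A$ cannot start too far from $0$ without violating $C(A) \geq (1-\ee)/2$ (the Lemma~\ref{counting} bound is near-tight only if $A$ is an initial segment of a progression based near $0$). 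I expect step (b) — gluing $0$ into the progression with only the claimed $45\ee^{1/2} n$ loss, with the sharp constant — to be the main obstacle, since it requires squeezing the $C(A) \geq (1-\ee)/2$ hypothesis hard enough to locate the left endpoint of $A$, rather than merely its diameter and common difference.
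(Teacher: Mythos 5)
Your opening move already contains the obstacle that sinks the argument, and the workaround you propose does not repair it. With $\Gamma = \{(a,a') : a+a' \in A\}$ you correctly observe $|\Gamma| \approx n^2/2$, which is too sparse for Theorem~\ref{1+theta}. But the "top half" rescue fails: the claim that one can extract $\Gamma^+ \subseteq A^+ \times A^+$ with $|\Gamma^+| \geq (1-c\ee^{1/2})|A^+|^2$ and $A^+ +_{\Gamma^+} A^+ \subseteq A$ is false even in the extremal example $A = \{1,2,\dots,n\}$. There, for $a_i, a_j$ both in the top half one has $a_i + a_j > n = \max A$, so essentially \emph{no} pair in $A^+ \times A^+$ has its sum in $A$: the density of such pairs is near $0$, not near $1$. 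The Markov/averaging step you describe only says that an element $a_i$ has about $n-i$ partners $j$ (ranging over \emph{all} of $A$) with $a_i + a_j \in A$; for $i$ in the top half this is at most $n/2$ partners, almost all of which lie in the bottom half, so restricting both coordinates to $A^+$ destroys the count. No choice of a large subset of $A$ makes the sum-graph $\Gamma$ dense, because Lemma~\ref{counting} caps its density at $1/2$ for any set of positive integers.

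The missing idea is to re-encode the same triples as \emph{differences} rather than sums: each solution of $a + a' = a''$ with all three in $A$ gives $a'' - a = a' \in A$, so the hypothesis $C(A) \geq (1-\ee)/2$ yields at least $(1-\ee)n^2/2$ pairs with $a - a' \in A$ and, by symmetry, another $(1-\ee)n^2/2$ pairs with $a' - a \in A$; since $A \subset \Z_{>0}$ these two families are disjoint, so $\Gamma = \{(a,a') : |a-a'| \in A\}$ has $|\Gamma| \geq (1-\ee)n^2$ and $|A -_\Gamma A| \leq 2n < (1+\theta - 11\ee^{1/2})n$. Theorem~\ref{1+theta} then applies directly to the difference set, with no need to pass to a subset or to propagate structure back. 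Your step (b) is also left essentially open; the paper resolves it by a counting argument inside $P$ itself (the number of pairs $(a,a') \in P \times P$ with $|a-a'| \notin P$ is at least $\min(m,|P|)\,|P|$ when the least element of $P$ is $md$, and at most $40\ee^{1/2}n|P|$ from the density of $\Gamma$), which pins $m \leq 40\ee^{1/2}n$ and lets $P$ be extended to $0$ at the stated cost.
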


\begin{proof}
Let $\Gamma \subset A \times A$ be the subset defined by
\[ \Gamma = \{(a,a') \in A \times A \colon |a-a'| \in A\}. \]
Since $C(A) \geq (1-\ee)/2$, the number of pairs $(a,a') \in A \times A$ with $a-a' \in A$ is at least $(1-\ee)n^2/2$, and by symmetry there are also at least $(1-\ee)n^2/2$ pairs $(a,a') \in A \times A$ with $a'-a \in A$. It follows that $|\Gamma| \geq (1-\ee)n^2$. By the construction of $\Gamma$ we have
\[|A -_\Gamma A | \le 2n.\]
Since $\ee < 2^{-20}$, we have $2n \leq (1+\theta- 11\ee^{1/2})n$, where $\theta = \tfrac{\sqrt{5}+1}{2}$. Thus by Theorem~\ref{1+theta}, there is an arithmetic progression $P$ with 
\[ |P| \leq |A-_\Gamma A| - (1-5\ee^{1/2})n \leq (1+5\ee^{1/2})n, \]
such that $|A \cap P| \geq (1-\ee^{1/2})n$. It remains to show that $P$ can be extended to an arithmetic progression $\wt{P}$ containing $0$, and that $|\wt{P}\setminus P|$ is small.

Note that if $(a,a') \in \Gamma$, then either $a \in A\setminus P$, or $a' \in A\setminus P$, or $|a-a'| \in A\setminus P$, or $(a,a',|a-a'|) \in P \times P \times P$. Hence
\[  |\Gamma| \leq 4n |A\setminus P| + \#\{(a,a') \in P \times P \colon |a-a'| \in P\}. \]
Since $|\Gamma| \geq (1-\ee)n^2$ and $|A\setminus P| \leq \ee^{1/2}n$, we have
\[ \#\{(a,a') \in P \times P \colon |a-a'| \in P\} \geq (1-5\ee^{1/2})n^2. \]
It follows that the elements of $P$ must be in the residue class $0\pmod d$, where $d$ is the common difference of $P$, since otherwise $(P-P) \cap P= \emptyset$. Furthermore, from $|P| \leq (1+ 5\ee^{1/2})n$ we get
\[ \#\{(a,a') \in P \times P \colon |a-a'| \notin P\} \leq |P|^2 - (1-5\ee^{1/2})n^2 \leq 40\ee^{1/2}n|P|.  \]
Let $md$ be the smallest element in $P$, so that $P$ can be extended to an arithmetic progression $\wt{P}$ containing $0$, with $|\wt{P}| = |P| + m$. Then
\[ 
\begin{split}
\#\{(a,a') \in P \times P \colon |a-a'| \notin P\} &\geq \#\{(a,a') \in P \times P \colon |a-a'| < md\} \\
&\geq \min\left(m,|P|\right) |P|.
\end{split}
\]
Combining the upper and lower bounds above, we get $m \leq 40\ee^{1/2}n$, and hence
\[ |\wt{P}| = |P| + m \leq (1+5\ee^{1/2})n + 40\ee^{1/2}n \leq (1+45\ee^{1/2})n. \]
The proof is completed by replacing $P$ by $\wt{P}$.
\end{proof}

\begin{lemma}\label{lem:d1=d2}
Let $P_1,P_2$ be arithmetic progressions of the form
\[ P_1 = \{0,d_1,2d_1,\cdots,(\ell_1-1)d_1\}, \ \ P_2 = \{-(\ell_2-1)d_2,\cdots,-2d_2,-d_2,0\}. \]
If $d_1 \neq d_2$, then the number of pairs $(a_1,a_2) \in P_1 \times P_2$ with $a_1+a_2 \notin P_1\cup P_2$ is at least
\[ \tfrac{1}{4}\min(\ell_1^2, \ell_2^2) - \tfrac{1}{2}(\ell_1-\ell_2)^2 - \ell_1 - \ell_2.   \]
\end{lemma}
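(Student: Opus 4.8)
The plan is to normalise the configuration, to describe \emph{exactly} which pairs $(a_1,a_2)\in P_1\times P_2$ have their sum outside $P_1\cup P_2$ by means of a divisibility condition, and then to count those pairs. For the normalisation: we may assume $d_1,d_2>0$, and the reflection $x\mapsto -x$ turns $P_1$ into a progression of the same shape as $P_2$ (with parameters $\ell_1,d_1$) and $P_2$ into one of the same shape as $P_1$ (with parameters $\ell_2,d_2$), sends $a_1+a_2$ to $-(a_1+a_2)$, and sends $P_1\cup P_2$ to $(-P_1)\cup(-P_2)$; it is therefore a bijection preserving the set of pairs in question, while the target bound is symmetric in the indices $1,2$. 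So we may also assume $d_1>d_2$. Writing $g=\gcd(d_1,d_2)$ and $d_i=ge_i$ with $\gcd(e_1,e_2)=1$, this gives $e_1>e_2\ge 1$, hence $e_1\ge 2$.

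For the classification, parametrise $a_1=id_1$ with $0\le i\le\ell_1-1$ and $a_2=-jd_2$ with $0\le j\le\ell_2-1$, so that $a_1+a_2=g(ie_1-je_2)$. Every element of $P_1$ is $\ge 0$ and every element of $P_2$ is $\le 0$; so when $a_1+a_2>0$ (equivalently $ie_1>je_2$) one has $a_1+a_2\in P_1\cup P_2$ iff $a_1+a_2=kd_1$ for some $0\le k\le\ell_1-1$, and the equation $ie_1-je_2=ke_1$ together with $\gcd(e_1,e_2)=1$ is equivalent to $e_1\mid j$ (the constraint $0\le k\le\ell_1-1$ then being automatic). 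Symmetrically, when $a_1+a_2<0$ one has $a_1+a_2\in P_1\cup P_2$ iff $e_2\mid i$, and when $a_1+a_2=0$ the pair is never ``bad''. Hence the bad pairs are exactly those with $ie_1>je_2$ and $e_1\nmid j$, or with $ie_1<je_2$ and $e_2\nmid i$; in particular the number of bad pairs is at least $B_1:=\#\{(i,j):ie_1>je_2,\ e_1\nmid j\}$, and I would bound only this quantity.

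To estimate $B_1$, fix $i\in\{0,\dots,\ell_1-1\}$: the admissible $j$ (those with $je_2<ie_1$ and $0\le j\le\ell_2-1$) form an initial block $\{0,1,\dots,J_i-1\}$ with $J_i=\min(\lceil ie_1/e_2\rceil,\ell_2)$, of which exactly $J_i-\lceil J_i/e_1\rceil\ge\tfrac12 J_i-1$ fail to be divisible by $e_1$, using $e_1\ge 2$. Summing over $i$ and using $J_i\ge\min(i,\ell_2)$ (valid because $e_1\ge e_2$) yields $B_1\ge\tfrac12\sum_{i=0}^{\ell_1-1}\min(i,\ell_2)-\ell_1$. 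It remains to evaluate $\sum_{i=0}^{\ell_1-1}\min(i,\ell_2)$ — which equals $\tfrac12\ell_1(\ell_1-1)$ when $\ell_1\le\ell_2$ and $\ell_1\ell_2-\tfrac12\ell_2(\ell_2+1)$ when $\ell_1>\ell_2$ — and to subtract from the resulting lower bound the target quantity $\tfrac14\min(\ell_1^2,\ell_2^2)-\tfrac12(\ell_1-\ell_2)^2-\ell_1-\ell_2$; in both cases the difference is a sum of manifestly non-negative terms (for example, when $\ell_1>\ell_2$ it equals $\tfrac12\ell_2(\ell_1-\ell_2)+\tfrac12(\ell_1-\ell_2)^2+\tfrac34\ell_2$), and the lemma follows.

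The only genuinely new idea is the divisibility dichotomy for bad pairs; everything after that is elementary counting. I expect the final step to require the most care: one must verify that the $O(\ell_1+\ell_2)$ losses coming from the roundings $\lceil\cdot\rceil$ and from the crude bound $J_i-\lceil J_i/e_1\rceil\ge\tfrac12 J_i-1$ are absorbed entirely into the $-\ell_1-\ell_2$ slack, and in the unbalanced regime $\ell_1\gg\ell_2$ that the $-\tfrac12(\ell_1-\ell_2)^2$ term in the statement suffices to cover the shortfall of $B_1$ below $\tfrac14\ell_2^2$ — which is why the exact value of $\sum\min(i,\ell_2)$ is needed there rather than a one-line bound.
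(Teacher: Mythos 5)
Your proof is correct, and it takes a genuinely different route from the paper's. The paper bounds the complement: it counts, for each of $P_1$ and $P_2$, the pairs $(a_1,a_2)$ whose sum lands \emph{in} that progression, using the congruence $x_1\equiv y_1\pmod{d_2}$ (and its mirror) to get an upper bound $\tfrac{\ell_1(\ell_1-1)}{2d_2}+\ell_1 + \tfrac{\ell_2(\ell_2-1)}{2d_1}+\ell_2$ on all good pairs, subtracts from $\ell_1\ell_2$, and invokes $\max(d_1,d_2)\ge 2$ once at the end. You instead characterise the bad pairs \emph{exactly} — after the reflection normalisation and writing $d_i=ge_i$, a pair $(i,j)$ with $ie_1>je_2$ is bad iff $e_1\nmid j$, and dually — and then lower-bound one of the two resulting families directly. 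Both arguments hinge on the same coprimality mechanism, so the underlying combinatorics is the same; what your version buys is a clean structural description of the bad pairs (an iff, not just a one-sided count), while the paper's version is shorter because it never needs the $x\mapsto -x$ normalisation (it exploits the $1\leftrightarrow 2$ symmetry of its two upper bounds directly) and skips the case split on $\ell_1\lessgtr\ell_2$. I verified the details you flagged as delicate: $J_i=\min(\lceil ie_1/e_2\rceil,\ell_2)\ge\min(i,\ell_2)$ does follow from $e_1\ge e_2$, the bound $J_i-\lceil J_i/e_1\rceil\ge\tfrac12 J_i-1$ uses only $e_1\ge 2$, and in the final comparison both case-differences are manifestly nonnegative (in the balanced case the slack is $\tfrac12(\ell_1-\ell_2)^2+\ell_2-\tfrac14\ell_1\ge 0$ because $\ell_2\ge\ell_1$; in the unbalanced case it is $\tfrac12\ell_2(\ell_1-\ell_2)+\tfrac12(\ell_1-\ell_2)^2+\tfrac34\ell_2$ as you state).
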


\begin{proof}
By dividing all elements in $P_1$ and $P_2$ by $(d_1,d_2)$, we may assume that $(d_1,d_2) = 1$. Consider the number of pairs $(a_1,a_2) \in P_1 \times P_2$ with $a_1+a_2 \in P_1$. This is the same as the number of solutions to $x_1d_1 - x_2d_2 = y_1d_1$, where $0 \leq x_1,y_1 < \ell_1$ and $0 \leq x_2 < \ell_2$. We must have $x_1 \equiv y_1 \pmod{d_2}$. Since $x_1-y_1$ must also be non-negative, the number of such solutions is at most
\[ \sum_{y_1 = 0}^{\ell_1-1} \left(\frac{\ell_1-1-y}{d_2} + 1\right) \leq \frac{\ell_1(\ell_1-1)}{2d_2} + \ell_1. \]
Similarly, the number of pairs $(a_1,a_2) \in P_1 \times P_2$ with $a_1+a_2 \in P_2$ is at most $\ell_2(\ell_2-1)/(2d_1) + \ell_2$. It follows that the number of pairs $(a_1,a_2) \in P_1 \times P_2$ with $a_1 + a_2 \notin P_1 \cup P_2$ is at least
\[ \ell_1\ell_2 - \frac{\ell_1(\ell_1-1)}{2d_2} - \frac{\ell_2(\ell_2-1)}{2d_1} - \ell_1 - \ell_2. \]
The conclusion follows by noting that at least one of $d_1,d_2$ is at least $2$.
\end{proof}

\begin{proof}[Proof of Theorem \ref{carry}]
Let $A_1$ and $A_2$ be the set of all positive and negative elements in $A$, respectively. Let $n_1= |A_1|$ and $n_2=|A_2|$. Then $n_1 +n_2 \ge n -1$. Let $X_i$ ($i \in \{1,2\}$) be the number of pairs $(a,a') \in A_i \times A_i$ with $a+a' \notin A_i$. By Lemma~\ref{counting} applied to $A_1$ and $A_2$,  we have $X_i \geq \tfrac{1}{2} n_i^2$. On the other hand, by assumption we have
\[ X_1 + X_2 \leq \left(\tfrac{1}{4} + \ee\right) n^2. \]
It follows that
\[ \left(\tfrac{1}{4}+\ee\right)n^2 \geq \tfrac{1}{2}(n_1^2+n_2^2) \ge \tfrac{1}{4}(n-1)^2 + \tfrac{1}{4}(n_1-n_2)^2,  \]
which implies that
\[ (n_1-n_2)^2 \leq 4\ee n^2 + 2n \leq 6 \ee n^2 \]
since $n \geq \ee^{-1}$. Hence $|n_1-n_2| \leq 3\ee^{1/2}n$, and thus both $n_1$ and $n_2$ lie in the range $[n/2 - 2\ee^{1/2}n, n/2 + 2\ee^{1/2}n]$. Furthermore, we have
\[
\begin{split} 
X_1 &\leq \left(\tfrac{1}{4}+\ee\right)n^2 - X_2 \leq \left(\tfrac{1}{4}+\ee\right)n^2 - \tfrac{1}{2}n_2^2 \\
&\leq \left(\tfrac{1}{4}+\ee\right)n^2 - \tfrac{1}{2}\left(\tfrac{n}{2}-2\ee^{1/2}n\right)^2 \leq \left(\tfrac{1}{8} + 2\ee^{1/2}\right) n^2 \\
&\leq \left(\tfrac{1}{2} + 16\ee^{1/2}\right) n_1^2, 
\end{split}
\]
and similarly we have
\[ X_2 \leq \left(\tfrac{1}{2} + 16\ee^{1/2}\right) n_2^2. \]
Since $\ee < 2^{-50}$, we may apply Lemma~\ref{lem:carry-pos} to $A_1$ and $-A_2$ to find arithmetic progressions $P_1,P_2$ containing $0$, such that
\[ |P_i| \leq (1+ 270\ee^{1/4})n_i, \ \ |A_i \cap P_i| \geq (1-6\ee^{1/4})n_i. \]
Let $d_1,d_2$ be the common differences of $P_1,P_2$, respectively. Consider
\[ T = \{(a_1,a_2) \in A_1 \times A_2 \colon a_1 + a_2 \notin A\} \]
and
\[ T' = \{(a_1,a_2) \in P_1 \times P_2 \colon a_1 + a_2 \notin P_1 \cup P_2\}. \]
If $(a_1,a_2) \in T'$, then either $(a_1,a_2) \in T$, or $a_1 \in P_1\setminus A_1$, or $a_2 \in P_2\setminus A_2$, or $a_1+a_2 \in A\setminus (P_1 \cup P_2)$.
\[ 
\begin{split}
|T'| &\leq |T| + |P_1\setminus A_1||P_2| + |P_2\setminus A_2||P_1| + |A\setminus (P_1\cup P_2)|\min(|P_1|,|P_2|)  \\
&\leq |T| + 276\ee^{1/4}n_1 \cdot 2n_2 + 276\ee^{1/4}n_2 \cdot 2n_1 + 6\ee^{1/4}n \cdot 2\min(n_1,n_2) \\
&\leq |T| + 282\ee^{1/4}n^2.
\end{split}
\]
From the assumption $C(A) \geq 3/4-\ee$ and the facts that $C(A_1) \leq 1/2$ and $C(A_2) \leq 1/2$, it follows that
\[ |T| \leq \left(\tfrac{1}{4}+\ee\right)n^2 - \tfrac{1}{2}n_1^2 - \tfrac{1}{2}n_2^2 \leq \ee n^2 + \tfrac{n}{2} \leq 2\ee n^2.   \]
Hence $|T'| \leq 284\ee^{1/4}n^2$. On the other hand, if $d_1 \neq d_2$ then we can apply Lemma~\ref{lem:d1=d2} to get
\[
|T'| \geq \tfrac{1}{4}\min(|P_1|^2, |P_2|^2) - \tfrac{1}{2}(|P_1|-|P_2|)^2 - |P_1| - |P_2|.
\]
Using the bounds
\[ |P_i| \leq (1+270 \ee^{1/4})n_i \leq \tfrac{1}{2} (1+ 280\ee^{1/4})n \leq n \]
and 
\[ |P_i| \geq (1-6\ee^{1/4})n_i \geq \tfrac{1}{2} (1-10\ee^{1/4}) n \geq \tfrac{1}{3}n,  \]
we conclude that
\[ |T'| \geq  \tfrac{1}{36}n^2 - \tfrac{1}{2} (290\ee^{1/4}n)^2 - 2n \geq \tfrac{1}{40}n^2. \]
This is a contradiction to the previously obtained upper bound $|T'| \leq 284\ee^{1/4}n^2$, and hence we must have $d_1 = d_2$. The conclusion follows by taking $P$ to be the union $P_1 \cup P_2$, 
so that
\[ |P| \leq 2\max(|P_1|,|P_2|) \leq 2(1+270\ee^{1/4})\max(n_1,n_2) \leq (1+ 280\ee^{1/4})n \]
and
\[ |A\cap P| = |A_1 \cap P_1| + |A_2 \cap P_2| \geq (1-6\ee^{1/4})(n_1+n_2) \geq (1-10\ee^{1/4})n. \]
\end{proof}

\section{A continuous analogue of Theorem~\ref{carry}}\label{sec:continuous-carry}

For a measurable set $A \subset \R$ with finite Lebesgue measure $\lambda(A)$, we study the following quantity related to $C(A)$:
\[ \langle 1_A*1_A, 1_A\rangle = \int_{\R} \int_{\R} 1_A(y)1_A(x-y)1_A(x) \d x \d y. \]

\begin{theorem}[ Riesz-Sobolev Inequality]
	Let $f,g,h$ be three nonnegative functions on $ \R^{n}$. Then with 
	\[I(f,g,h) =\int_{\R^{n}} \int_{\R^{n}} f(y)g(x-y)h(x) \d x \d y,  \]
we have $I(f,g,h) \le I(f^*, g^*, h^*)$, where $f^*, g^*, h^*$ are the corresponding symmetric - decreasing rearangement of functions $f, g ,h$.
\end{theorem}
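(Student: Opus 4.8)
The plan is to follow the classical argument of F.~Riesz (for $n=1$) and Sobolev (for general $n$): reduce to indicator functions by the layer-cake representation, prove the inequality for sets in dimension one, and lift it to $\R^n$ by iterated Steiner symmetrization. For the reduction, I would use $f(x)=\int_0^\infty 1_{\{f>s\}}(x)\,\d s$ together with the analogous formulas for $g,h$, the trilinearity of $I$, and Tonelli's theorem to obtain
\[ I(f,g,h) = \int_0^\infty\!\!\int_0^\infty\!\!\int_0^\infty I\bigl(1_{\{f>s\}},\,1_{\{g>t\}},\,1_{\{h>u\}}\bigr)\,\d s\,\d t\,\d u; \]
since the symmetric-decreasing rearrangement commutes with taking level sets, the same identity holds with every function replaced by its rearrangement and every level set $\{f>s\}$ replaced by the centred ball of equal measure. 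Hence it suffices to prove, for measurable $A,B,C\subset\R^n$ of finite measure,
\[ J(A,B,C) := \bigl|\{(x,y)\in\R^n\times\R^n : y\in A,\ x-y\in B,\ x\in C\}\bigr| \le J(A^*,B^*,C^*), \]
where $A^*$ is the centred open ball with $|A^*|=|A|$, and likewise for $B^*,C^*$.

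The one-dimensional case carries the analytic content (Riesz's rearrangement inequality on $\R$). I would first reduce, by a standard approximation together with monotone convergence, to the case where $A,B,C$ are finite unions of bounded intervals, and then prove the inequality for such configurations by a sliding argument: successive two-point (polarization) rearrangements, or Riesz's original interval-sliding lemma, never decrease $J_1$, and the configurations converge to the triple of symmetric intervals. (Alternatively one may simply invoke the one-dimensional Brascamp--Lieb--Luttinger inequality.)

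To pass to $\R^n$, fix a unit vector $e$, write points as $(w,z)\in\R^{n-1}\times\R$ with $z$ the coordinate along $e$, and write $A_w=\{z:(w,z)\in A\}$ for the slices. Since subtraction splits across the two coordinate blocks, Fubini's theorem gives
\[ J(A,B,C) = \int_{\R^{n-1}}\!\!\int_{\R^{n-1}} J_1\bigl(A_w,\,B_{w'-w},\,C_{w'}\bigr)\,\d w\,\d w'. \]
Applying the one-dimensional inequality slice by slice, and using that Steiner symmetrization $S_e$ replaces each slice by the centred interval of the same length (so $(S_eA)_w=(A_w)^*$, and similarly for $B$ and $C$), yields $J(A,B,C)\le J(S_eA,S_eB,S_eC)$. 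One then iterates along a sequence of directions for which the iterates of $A$, of $B$, and of $C$ each converge in $L^1$ to the respective balls, and uses the continuity of $J$ under $L^1$-convergence of the three sets to conclude $J(A,B,C)\le J(A^*,B^*,C^*)$.

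The hard part is this last step: a \emph{single} sequence of Steiner symmetrizations must drive all three sets to balls simultaneously. While one set can always be symmetrized to a ball in the limit, three sets cannot be treated independently; this is handled by a compactness argument in the spirit of Brascamp--Lieb--Luttinger (or Burchard), treating the triple $(A,B,C)$ as one object, extracting a convergent subsequence of its symmetrized copies using the monotonicity and boundedness of $J$, and showing the limiting triple is invariant under every Steiner symmetrization and hence consists of balls. The one-dimensional base case is the other genuinely classical-but-delicate ingredient.
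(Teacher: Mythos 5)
The paper itself offers no proof of this theorem --- it simply defers to the reference it cites (Lieb's \emph{Analysis}, Chapter~3) --- and your sketch is an accurate outline of exactly the standard argument found there: layer-cake reduction to indicator functions, the one-dimensional Riesz rearrangement inequality for finite unions of intervals, the Fubini slicing identity for Steiner symmetrization, and a compactness/invariance argument to make a single sequence of symmetrizations drive all three sets to balls simultaneously. You correctly identify the two genuinely delicate ingredients (the one-dimensional base case and the simultaneous convergence) rather than glossing over them, so your approach matches the paper's intended source and nothing further is required.
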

One can find the proof in \cite[Chapter 3]{Lieb}.
To apply the Riez- Sobolev inequality, we notice that $A^*$, the symmetric rearangement of the set $A$ in our case is $I \subset \R$, the interval centered at the origin with $\lambda(I) = \lambda(A)$. Thus,
\begin{equation}\label{eq:rs} 
\langle 1_A*1_A, 1_A\rangle \leq \langle 1_I*1_I, 1_I\rangle = \tfrac{3}{4}\lambda(A)^2. 
\end{equation}
 Using Theorem~\ref{carry}, we may deduce the following structure theorem for sets $A$ when the inequality~\eqref{eq:rs} is almost sharp.

\begin{corollary}\label{cor:Riesz-Sobolev}
Let $A \subset \R$ be a measurable set with finite Lebesgue measure, and let $\ee \in (0,2^{-100})$. Assume that 
\[ \langle 1_A*1_A, 1_A\rangle \geq (1-\ee) \langle 1_I*1_I, 1_I\rangle, \]
where $I \subset \R$ is the interval centered at the origin with $\lambda(I) = \lambda(A)$. Then there exists an interval $J \subset \R$ centred at the origin such that $\lambda(J) \leq (1+ 561\ee^{1/4})\lambda(A)$ and $\lambda(A \cap J) \geq (1 - 21\ee^{1/4})\lambda(A)$.
\end{corollary}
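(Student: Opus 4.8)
The plan is to discretise $A$ at a scale $\delta\to0$, apply Theorem~\ref{carry} to the resulting finite set of integers, and transfer the conclusion back to $\R$. Since both hypothesis and conclusion are scale-invariant, normalise $\lambda(A)=1$; fix a dyadic $\delta$ and a small auxiliary parameter $\eta$, both tending to $0$, cut $\R$ into cells $I_j=[j\delta,(j+1)\delta)$, and let $B=\{j\in\Z:\lambda(A\cap I_j)\ge(1-\eta)\delta\}$ be the set of almost-full cells. Each such cell uses at least $(1-\eta)\delta$ of the finite measure of $A$, so $B$ is finite; and the Lebesgue density theorem, applied along the dyadic filtration, gives $\lambda\big(A\setminus\bigcup_{j\in B}I_j\big)\to0$ and hence $\delta|B|\to\lambda(A)$. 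Write $n=|B|\to\infty$; the threshold $(1-\eta)\delta$ (rather than, say, $\delta/2$) is needed so that $\sum_{j\in B\cap P}\lambda(A\cap I_j)$ stays close to $\delta|B\cap P|$ at the end.

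The first real step transfers the near-extremality hypothesis to $C(B)$. Writing $A'_\delta=\bigcup_{j\in B}I_j$ and integrating each tent $1_{I_j}*1_{I_k}$ over a cell yields the exact identity $\langle 1_{A'_\delta}*1_{A'_\delta},1_{A'_\delta}\rangle=\tfrac{\delta^2}{2}(X_1+X_2)$, where $X_1=\#\{(j,k)\in B^2:j+k\in B\}=C(B)n^2$ and $X_2=\#\{(j,k)\in B^2:j+k+1\in B\}$. Monotonicity of $\langle f*f,f\rangle$ in $f\ge0$ together with $\lambda(A\setminus A'_\delta)\to0$ gives $\langle 1_{A'_\delta}*1_{A'_\delta},1_{A'_\delta}\rangle\ge\langle 1_A*1_A,1_A\rangle-o(1)\ge(1-\ee)\tfrac34-o(1)$, hence $X_1+X_2\ge(\tfrac32-\tfrac32\ee-o(1))n^2$. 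To isolate $X_1$ I would bound $X_2=\sum_{r\in B-1}r_{B+B}(r)$: splitting $r_{B+B}$ at a level $t$, the truncated part contributes at most $t|B-1|=tn$ and the overshoot is at most $\|r_{B+B}\|_1-\sum_x\min(r_{B+B}(x),t)\le n^2-t(2n-t)=(n-t)^2$ by Pollard's theorem, so taking $t=n/2$ gives $X_2\le\tfrac34n^2+O(n)$. Hence $C(B)=X_1/n^2\ge\tfrac34-2\ee$ once $\delta$ is small; since $2\ee<2^{-50}$ and $n\ge(2\ee)^{-1}$ for small $\delta$, Theorem~\ref{carry} applies.

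Theorem~\ref{carry} produces an arithmetic progression $P\ni 0$ centred at $0$ with $|P|\le(1+280(2\ee)^{1/4})n$ and $|B\cap P|\ge(1-10(2\ee)^{1/4})n$; let $d$ be its common difference. I claim $d=1$. If $d\ge2$, then $|B\setminus d\Z|\le10(2\ee)^{1/4}n$, so applying \cite[Proposition~2.1]{Carries} to $(B\cap d\Z)/d$ gives $X_1\le\tfrac34n^2+O(\ee^{1/4})n^2+o(n^2)$, while $j+k+1\not\equiv0\pmod d$ forces $j+k+1\in B\setminus d\Z$ whenever $j,k\in d\Z$, whence $X_2\le O(\ee^{1/4})n^2$; adding these contradicts $X_1+X_2\ge(\tfrac32-\tfrac32\ee-o(1))n^2$ because $\ee<2^{-100}$. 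With $d=1$, $P=\{-m,\dots,m\}$ is a symmetric interval, so $J_0=\bigcup_{j=-m}^m I_j=[-m\delta,(m+1)\delta)$ satisfies $\lambda(J_0)=|P|\delta$ and $\lambda(A\cap J_0)\ge\sum_{j\in P\cap B}\lambda(A\cap I_j)\ge(1-\eta)\delta|P\cap B|$. Enlarging $J_0$ by $O(\delta)$ to a symmetric interval $J$ about the origin, invoking $\delta n\to\lambda(A)$, and letting $\delta,\eta\to0$ converts these into $\lambda(J)\le(1+280(2\ee)^{1/4}+o(1))\lambda(A)\le(1+561\ee^{1/4})\lambda(A)$ and $\lambda(A\cap J)\ge(1-10(2\ee)^{1/4}-o(1))\lambda(A)\ge(1-21\ee^{1/4})\lambda(A)$, as desired.

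The crux is that the discretisation sees no clean Freiman-type structure: convolving the indicators of two unit cells smears across a cell boundary, producing the parasitic count $X_2$, and the whole scheme works only because Pollard's theorem pins $X_2$ down to within $O(\ee)$ of its extremal value $\tfrac34n^2$ — bounding it merely by $\tfrac34n^2$ plus an absolute constant would leave $C(B)$ too small for Theorem~\ref{carry} to yield a bound tending to $3/4$. The remaining points — the Lebesgue-density bookkeeping, the elimination of $d\ge2$, and tracking the $\ee^{1/4}$-constants through — are routine by comparison.
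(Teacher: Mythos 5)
Your proposal is correct and follows the same broad strategy as the paper — discretise $A$ into cells, relate the functional $\langle 1_{\wt A}\ast 1_{\wt A},1_{\wt A}\rangle$ to the two additive counts $X_1$ (summing triples hitting a cell) and $X_2$ (hitting the neighbouring cell), get a lower bound on $C$ of the discretisation, invoke Theorem~\ref{carry}, and transfer back. The two genuine deviations are worth noting. First, to extract $X_1$ you invoke Pollard's inequality to cap $X_2\le \tfrac34 n^2+O(n)$; the paper instead just observes that $X_2$ is the number of summing triples of $\CA-1$, so the elementary bound $C(\CA-1)\le\tfrac34+O(1/|\CA|)$ from~\cite[Prop.~2.1]{Carries} already gives the same cap, making Pollard a slightly heavier tool for the same conclusion. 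Second, and more substantively, the paper applies Theorem~\ref{carry} \emph{twice} — to $\CA$ and to $\CA-1$, each of which inherits $C\ge\tfrac34-2\ee$ — obtaining two centred arithmetic progressions $P,P'$, and then the large overlap $|P\cap(P'+1)|\ge 0.9|P|$ forces common difference $1$. You apply Theorem~\ref{carry} only once, to $B$, and rule out $d\ge 2$ directly: with $B$ concentrated in $d\Z$ you get $X_2=O(\ee^{1/4})n^2$ (since $j+k+1\not\equiv 0\pmod d$ whenever $j,k\in d\Z$), while $X_1\le\tfrac34n^2+O(n)$ always, contradicting $X_1+X_2\ge(\tfrac32-O(\ee))n^2$. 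Your route is self-contained and a bit more economical (one application of the structure theorem instead of two); the paper's is arguably cleaner in that it never needs a bespoke mod-$d$ count. Both yield the stated constants with comfortable slack. One small imprecision: you justify $\langle 1_{A'_\delta}\ast 1_{A'_\delta},1_{A'_\delta}\rangle\ge\langle 1_A\ast 1_A,1_A\rangle-o(1)$ by ``monotonicity in $f\ge 0$,'' but $A'_\delta$ is not a superset of $A$ (the almost-full cells protrude slightly outside $A$); the correct justification is the one you also mention — $|A\triangle A'_\delta|\to 0$ by Lebesgue density — combined with continuity of the trilinear form under small symmetric difference, which is exactly what the paper uses.
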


A more general result concerning $\langle 1_A*1_B, 1_C\rangle$ for $A,B,C \subset \R$ was established in~\cite{Christ0,Christ1}, and even more generally in~\cite{Christ2,Christ3} for $A,B,C \subset \R^d$.

\begin{proof}
We construct the following discrete version $\CA \subset \Z$ of $A$ in the following standard way (c.f.~\cite[Section 6]{Christ0}). Let $\eta,\delta>0$ be sufficiently small in terms of $\ee$ and $\lambda(A)$. For each $n \in \Z$, let $I_n$ be the interval $(\eta n, \eta(n+1))$. Let
\[ \CA = \{n \in \Z \colon \lambda(A \cap I_n) \geq (1-\delta) \lambda(I_n)\}. \]
Let $\wt{A} = \cup_{n \in \CA} I_n$, so that $\lambda(\wt{A}) = \eta |\CA|$. By the Lebesgue differentiation theorem, we have $|A \triangle \wt{A}| \rightarrow 0$ as $\max(\eta,\delta) \rightarrow 0$. Thus by taking $\eta,\delta$ small enough, we may assume that
\[ \eta^{-1}\left(1-\tfrac{\ee}{100}\right)\lambda(A) \leq |\CA| \leq \eta^{-1} \left(1+\tfrac{\ee}{100}\right)\lambda(A), \]
and that
\[ \langle 1_{\wt{A}}*1_{\wt{A}}, 1_{\wt{A}} \rangle \geq \langle 1_A*1_A, 1_A \rangle - \tfrac{1}{100} \ee  \lambda(A)^2 \geq \left(\tfrac{3}{4} - \tfrac{4\ee}{5}\right) \lambda(A)^2. \]

Now we connect $\langle 1_{\wt{A}}*1_{\wt{A}}, 1_{\wt{A}} \rangle$ with $C(A)$. Note that
\[ \langle 1_{\wt{A}}*1_{\wt{A}}, 1_{\wt{A}} \rangle = \sum_{a,b,c \in \CA} \langle 1_{I_a}*1_{I_b}, 1_{I_c} \rangle.  \]
The summand above is nonzero only when $a+b = c$ or $a+b = c+1$, in which case it has value $\eta^2/2$. Hence
\[ \langle 1_{\wt{A}}*1_{\wt{A}}, 1_{\wt{A}} \rangle = \tfrac{1}{2} \eta^2 \left(\sum_{a,b,c \in \CA} 1_{a+b=c} + \sum_{a,b,c \in \CA} 1_{a+b=c+1}\right). \]
It follows that
\[ \sum_{a,b,c \in \CA} 1_{a+b=c} + \sum_{a,b,c \in \CA} 1_{a+b=c+1} \geq 2\eta^{-2}\left(\tfrac{3}{4} - \tfrac{4\ee}{5}\right) \lambda(A)^2 \geq \left(\tfrac{3}{2}-\tfrac{8\ee}{5}\right) |\CA|^2. \]

The first sum above is the number of summing triples in $\CA$, and the second sum above is the number of summing triples in $\CA - 1 = \{n-1 \colon n \in \CA\}$. Both of these two sums are bounded above by $\tfrac{3}{4} |\CA|^2 + O(|\CA|)$, and thus they are also both bounded below by
\[ \left(\tfrac{3}{2} - \tfrac{8\ee}{5}\right) |\CA|^2 - \tfrac{3}{4} |\CA|^2 - O(|\CA|) \geq \left(\tfrac{3}{4}-2\ee\right) |\CA|^2. \]
Hence $C(\CA) \geq 3/4 - 2\ee$ and $C(\CA-1) \geq 3/4 - 2\ee$.

By Theorem~\ref{carry}, there exist arithmetic progressions $P,P'$ centred at the origin such that
\[ |P| \leq (1+560\ee^{1/4})|\CA|, \ \ |P \cap \CA| \geq (1-20\ee^{1/4})|\CA|, \]
and
\[ |P'| \leq (1+560\ee^{1/4})|\CA|, \ \ |P' \cap (\CA-1)| \geq (1-20\ee^{1/4})|\CA|. \]
Since
\[
\begin{split} 
|P \cap (P'+1)| &\geq |P \cap \CA| - |\CA \setminus (P'+1)| \\
&\geq (1-20\ee^{1/4})|\CA| - 20\ee^{1/4}|\CA| \geq 0.99|\CA| \geq 0.9|P|,
\end{split}
\]
the common difference of $P$ and $P'$ must be $1$. Let $J$ be the smallest interval centred at the origin containing $I_n$ for all $n \in P$, so that
\[ \lambda(J) \leq \eta (|P|+1) \leq (1+560\ee^{1/4}) \eta |\CA| + \eta \leq (1+561\ee^{1/4})\lambda(A).  \] 
Moreover, we have
\[ \lambda(A \cap J) \geq \sum_{n \in \CA\cap P} \lambda(A \cap I_n).  \]
Each summand above is at least $(1-\delta)\lambda(I_n) = (1-\delta)\eta$ by the definition of $\CA$. Hence
\[ \lambda(A \cap J)  \geq (1-\delta)\eta |\CA \cap P| \geq (1-\delta)(1-20\ee^{1/4}) \eta |\CA| \geq (1-21\ee^{1/4})\lambda(A). \]
This completes the proof.

\end{proof}

\bibliographystyle{plain}
\bibliography{biblio}{}

\end{document}